\newcommand{\ep}{\varepsilon}
\newcommand{\la}{\lambda}
\renewcommand{\phi}{\varphi}
\newcommand{\Si}{\Sigma}
\newcommand{\ZZ}{{\mathbb Z}}
\newcommand{\RR}{{\mathbb R}}
\newcommand{\cE}{\mathcal E}
\newcommand{\Aut}{\operatorname{Aut}}
\newcommand{\Out}{\operatorname{Out}}
\newcommand{\Inn}{\operatorname{Inn}}
\newcommand{\Homeo}{\operatorname{Homeo}}
\newcommand{\Int}{\operatorname{Int}}
\newcommand{\Isom}{\operatorname{Isom}}
\newcommand{\Mcg}{\operatorname{Mcg}}
\newcommand{\sm}{\smallsetminus}
\newcommand{\id}{{\text{id}}}
\newcommand*\wbar[1]{%        % widebar 
  \hbox{ \kern-0.3em%
    \vbox{%
      \hrule height 0.5pt  % The actual bar
      \kern0.25ex%         % Distance between bar and symbol
      \hbox{%
        \kern-0.15em%       % Shortening on the left side
        \ensuremath{#1}%
        \kern-0.05em%       % Shortening on the right side
      }%
    }%
  \kern0.05em}%
}
\newtheorem{theorem}{Theorem} 
\newtheorem{proposition}[theorem]{Proposition}
\newtheorem{corollary}[theorem]{Corollary}
\theoremstyle{definition}     % this and the line below gives definitions in roman
\newtheorem{definition}[theorem]{Definition}
\theoremstyle{remark}
\newtheorem{remark}[theorem]{Remark}
\newtheorem{question}[theorem]{Question}
\title{Virtual and welded periods of classical knots}
\author[H. U. Boden]{Hans U. Boden}
\address{Mathematics \& Statistics, McMaster University, Hamilton, Ontario}
\email{boden@mcmaster.ca}
\urladdr{math.mcmaster.ca/~boden}
\author[A. J. Nicas]{Andrew J. Nicas}
\address{Mathematics \& Statistics, McMaster University, Hamilton, Ontario}
\email{nicas@mcmaster.ca}
\subjclass[2010]{Primary: 57M25, Secondary: 57M27}
\keywords{Virtual and welded knots, periodic knots, group automorphisms, Nielsen realization}
\date{\today}                                           % Activate to display a given date or no date
\begin{document}

\begin{abstract}
We show that any virtual or welded period of a classical knot can be realized as a classical period. A direct consequence is that a classical knot admits only finitely many virtual or welded periods. 
\end{abstract}

\maketitle
%%%%%%%%%%%%%%%%%%%%%%%%%%%%%%%%%%%%%%%%%%%%%%%%%%%%%%%%%%%%%%%%%%%%%%%%%%%%%%%%%%%%%%%%%%%%%%%%%%%%

%\section*{Introduction}

In \cite{Fox-1962-a}, Fox gave an overview of the   Smith Conjecture,  which was unresolved at the time, concerning periodic transformations of the $3$-sphere preserving a simple closed curve.
\hbox{Question 7}  % WARNING TeX hack
of \cite{Fox-1962-a} deals with the periodic symmetries of a knot, which he observed come in eight different varieties.  
Of special interest are  the two cases  
corresponding to knots admitting a ``free period'' or a ``cyclic period'', respectively.
A \emph{symmetry} of a (tame) knot $K\subset S^3$ is a diffeomorphism $\varphi \colon S^3 \to S^3$ of finite order $p$ (the \emph{period} of $\varphi$)
such that $\varphi(K)=K$.
A knot $K$ is said to be:
\begin{itemize}
\item \emph{freely periodic} if there is a fixed point free, orientation preserving symmetry of $K$,
\item  \emph{(cyclically) periodic} if there is an orientation preserving symmetry of $K$ whose fixed point set is non-empty and disjoint from $K$.
\end{itemize}
In the second case,
the solution of the Smith Conjecture \cite{Bass-Morgan} implies that the fixed point set is an unknotted circle and that the symmetry is conjugate in the diffeomorphism group of $S^3$
to a rotation about an axis in $S^3$.
A fundamental result of E.~Flapan asserts that a nontrivial classical knot admits only finitely many periods (see \cite{Flapan-1985}, where she also shows that any knot, other than a torus knot, admits only finitely many free periods).

Virtual knots and links were defined by Kauffman in \cite{Kauffman-1999} as equivalence classes of virtual knot diagrams by the extended Reidemeister moves. Very briefly, a virtual knot diagram is a 4-valent graph in the plane with two types of crossings, classical and virtual crossings. Classical crossings are depicted with over and under-crossings as usual, and virtual crossings are circled.
A diagram without virtual crossings is called classical because it represents a classical knot or link.
In \cite{Goussarov-Polyak-Viro}, it is shown that for classical knots, equivalence through virtual knot diagrams implies equivalence as classical knots, and thus virtual knot theory is an extension of classical knot theory.

Allowing the first forbidden move (see  \cite{Kauffman-1999}) leads to the coarser notion of \emph{welded knots}.
Satoh showed that welded knots represent ribbon knotted tori in $S^4$ \cite{Satoh-2000}.
Two classical knots that are equivalent as welded knots are equivalent as classical knots, and so classical knot theory embeds into the theory of welded knots.
In this paper, we are concerned with symmetries of classical knots that arise from periodic virtual knot diagrams, which were first
introduced by S.\,Y.~Lee in \cite{Lee-2012} and which are extended here to include welded knots.

\begin{definition}
Let $p$ be a positive integer.
A virtual knot diagram that misses the origin is said to be \emph{$p$-periodic} if it is invariant under a rotation about the origin in the plane by an angle of $2\pi/p$.
A virtual or welded knot $K$ is said to be \emph{$p$-periodic} if it admits a $p$-periodic virtual knot diagram, and in that case, we say that $p$ is a \emph{virtual period} or \emph{welded period} for the virtual or welded knot $K$.
\end{definition}

S.\,Y.~Lee asked whether classical knots or links can admit ``exotic'' virtual periods, cf.~\cite[Question 4]{Lee-2012}, and we rephrase his question to include welded knots as follows.

\begin{question}
Can a classical knot admit virtual or welded periods which are different from its classical periods?  
\end{question}

The following theorem answers this question and is our main result.

\begin{theorem} \label{thm-main}
If a classical knot admits a $p$-periodic virtual or welded knot diagram, then it admits a $p$-periodic classical knot diagram. The sets of virtual and welded periods of a given classical knot are the same as its set of classical (cyclic) periods. In particular, a nontrivial classical knot admits only finitely many virtual and welded periods.
\end{theorem}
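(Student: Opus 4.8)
The reverse containment is immediate, since a $p$-periodic classical diagram is a $p$-periodic virtual diagram and a $p$-periodic virtual diagram is a $p$-periodic welded diagram; and once the three sets of periods are known to coincide, the finiteness clause follows at once from Flapan's theorem. So the content of the theorem is the single implication: if a classical knot $K$ has a $p$-periodic virtual (equivalently, welded) diagram, then $K$ has a $p$-periodic classical diagram. The plan is to extract from a $p$-periodic diagram of $K$ a finite cyclic group of automorphisms of the knot group $G_K=\pi_1(S^3\setminus K)$ fixing the peripheral structure, then realize it by a finite cyclic group of diffeomorphisms of the knot complement, extend over $K$, and invoke the solution of the Smith Conjecture to recognize the result as a classical cyclic period.

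Compactifying the plane to $S^2$ and isotoping near the origin, one may assume that the $p$-periodic diagram $D$ of $K$ also misses $\infty$, so that $D$ is invariant under an orientation-preserving rotation $\rho$ of $S^2$ of order $p$ with $\operatorname{Fix}(\rho)=\{0,\infty\}$ disjoint from $D$. Then $\rho$ permutes the arcs of $D$, hence the Wirtinger generators, carrying relations to relations, so (as $\rho^p=\operatorname{id}$) it induces a $\ZZ/p$-action by automorphisms of the Wirtinger group of $D$; because $K$ is classical this group is canonically $G_K$, and since the Wirtinger group is also invariant under the forbidden move, the same identification holds when $D$ is merely a $p$-periodic welded diagram of $K$. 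Each of these automorphisms sends a meridian of $K$ to a meridian, hence fixes the meridian conjugacy class and acts trivially on $H_1(G_K)\cong\ZZ$. Moreover the quotient $\bar D=D/\rho$ is a virtual link diagram $\bar K\cup\bar A$ on $S^2/\rho\cong S^2$ in which $\bar A$, the image of the axis of the extension $\hat\rho$ of $\rho$ to $S^3$, is an honest unknot; this records that a virtual period is inherently of axial (cyclic) type, so the symmetry we seek should fix an unknotted circle, and the covering $D\to\bar D$ furnishes a short exact sequence $1\to G_K\to\bar G\to\ZZ/p\to 1$ with $\bar G$ the group of $\bar K\cup\bar A$ in which our $\ZZ/p$-action on $G_K$ is the induced deck action.

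Now realize this action. By geometrization the knot complement $M_0=S^3\setminus N(K)$ is hyperbolic, Seifert fibered, or has a nontrivial JSJ decomposition, and in each case the appropriate Nielsen-realization result --- Mostow rigidity in the hyperbolic case, the classification of symmetries of Seifert fibered spaces, and induction over the JSJ graph for satellite knots --- produces a genuine $\ZZ/p$-group of diffeomorphisms of $M_0$ inducing the automorphisms above, which may moreover be chosen (using the axial structure recorded by $\bar A$) to extend to a $\ZZ/p$-action $\hat\varphi$ on $S^3$ with $\hat\varphi(K)=K$. If $\hat\varphi$ is trivial on $\partial N(K)$ then, being trivial on homology there and of finite order, it is trivial on a neighborhood of $K$, so $\operatorname{Fix}(\hat\varphi)\supseteq K$ and the Smith Conjecture forces $K$ to be the unknot --- which admits every period. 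Otherwise $\hat\varphi$ acts on $\partial N(K)$ by a nontrivial finite-order map that is trivial on homology, hence without fixed points, so it acts freely near $K$; then by the Smith Conjecture $\operatorname{Fix}(\hat\varphi)$ is a single unknotted circle, necessarily disjoint from $K$, and $\hat\varphi$ is, up to isotopy, a rotation about this axis. In either case $K$ admits a $p$-periodic classical diagram, obtained in the nontrivial case by projecting $K$ radially away from its axis of symmetry; the welded case needs nothing further.

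The principal obstacle is the realization step: extracting a single finite-order automorphism of $G_K$ is elementary, but promoting the abstract $\ZZ/p$-action on the knot group to a $\ZZ/p$-group of diffeomorphisms of the complement is a Nielsen-realization problem that must be handled case by case for hyperbolic, Seifert-fibered and toroidal complements, together with the verification that the action extends over $K$ with fixed-point set an unknotted circle disjoint from $K$ --- that is, that it produces a cyclic period rather than a free period or another of Fox's symmetry types. Two further points need care: one must check that the realized symmetry has order exactly $p$ and not a proper divisor --- equivalently that the composite $\ZZ/p\to\operatorname{Aut}(G_K)\to\operatorname{Out}(G_K)$ is injective --- and one must justify, for an arbitrary virtual or welded diagram of a classical knot $K$, the identification of its Wirtinger group with $\pi_1(S^3\setminus K)$, so that the automorphisms constructed above genuinely act on the group of $K$.
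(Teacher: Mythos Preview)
Your outline is broadly on the same track as the paper --- extract a finite-order automorphism of $G_K$ from the periodic diagram, realize it by a periodic diffeomorphism of the exterior, extend to $S^3$, and analyze the fixed set --- but the step you flag as ``the principal obstacle'' really is unresolved in your argument, and the paper handles it by a different mechanism than you suggest.

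The specific gap is the exclusion of free periods. In your penultimate paragraph you argue that if $\hat\varphi$ is nontrivial on $\partial N(K)$ then it is fixed-point free there, and then assert ``by the Smith Conjecture $\operatorname{Fix}(\hat\varphi)$ is a single unknotted circle''. But the Smith Conjecture only says the fixed set is \emph{empty or} an unknotted circle; nothing you have said rules out $\operatorname{Fix}(\hat\varphi)=\varnothing$, i.e.\ a free period. Your appeal to ``the axial structure recorded by $\bar A$'' does not close this: the quotient $\bar K\cup\bar A$ is only a \emph{virtual} link diagram, so $\bar G$ is not a priori the fundamental group of any $3$-manifold, and there is no evident way to transport the ``axis'' $\bar A$ into the geometric realization produced by Nielsen realization on $E_K$.

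The paper resolves this point by an independent argument that does not use the quotient diagram at all. The key observation is that the periodic diagram produces not merely an element of $\Out(G_K)$ of order $p$, but an honest element of $\Aut(G_K)$ of order $p$ (this is the content of Proposition~\ref{prop-order}). The paper then proves (Theorem~\ref{thm:fixedpoint}) that for any periodic homeomorphism $F$ of $E_K$ whose outer class lifts to a finite-order automorphism, $F$ must have a fixed point in $\Int(E_K)$: one lifts $F$ to a periodic homeomorphism of the universal cover, which by Waldhausen is $\RR^3$, and applies Smith theory there. This is precisely the step that separates cyclic periods from free periods, and it is the paper's main technical contribution; your outline has no analogue of it.

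Two smaller points. First, the paper organizes the case analysis as torus knot versus non-torus knot: for torus knots it uses Schreier's description of $\Aut(G_K)$ and the Kurosh subgroup theorem, and for non-torus knots it invokes Zimmermann's Nielsen realization for Haken manifolds in one stroke, rather than a geometrization trichotomy. Second, the verification that the automorphism has order exactly $p$ (and that this survives to $\Out$) is carried out in Propositions~\ref{prop-order} and~\ref{prop:torfreecenterless} via the longitude and the fact that $G_K$ is torsion free with trivial center; you correctly note this needs checking, and it does require the classical hypothesis (the longitude is nontrivial).
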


Let $N(K)$ be a regular neighborhood of a knot $K \subset S^3$.
The \emph{exterior} of $K$ is the compact manifold with torus boundary: $E_K = S^3 \sm \Int(N(K))$.
Let $\Mcg(E_K)$ denote the \emph{mapping class group of $E_K$}, that is, the group of diffeomorphisms of $E_K$ modulo the subgroup of diffeomorphisms isotopic to the identity.
Let  $G_K=\pi_1(E_K)$  denote knot group of $K$.

The proof of Theorem \ref{thm-main} will be presented in \ref{sec:proof}.
We sketch the argument in the special case where  $K$ is a hyperbolic knot, that is, $\Int(E_K)$ admits a complete hyperbolic metric of finite volume.
Let $\Out(G_K)$ be the group of  outer automorphisms of $G_K$ (see \ref{section:autom}) and
let $\Isom(E_K)$ be the group of isometries of the complete hyperbolic metric of finite volume.

\begin{theorem}[see Theorem 10.5.3 \cite{Kawauchi-1990}] \label{thm-Mostow}
If $K$ is a hyperbolic knot, then there are isomorphisms
$~\Mcg(E_K) \cong \Out(G_K) \cong \Isom(E_K)$.
\end{theorem}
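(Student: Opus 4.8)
The plan is to construct two natural homomorphisms,
\[
\Phi\colon \Mcg(E_K) \lto \Out(G_K), \qquad \iota\colon \Isom(E_K) \lto \Mcg(E_K),
\]
and to prove that each is an isomorphism, the first via Waldhausen's theorem on Haken $3$-manifolds and (together with surjectivity of the first) via Mostow--Prasad rigidity. A diffeomorphism $f$ of $E_K$ induces an automorphism of $G_K=\pi_1(E_K,x_0)$ once a path from $x_0$ to $f(x_0)$ is chosen; a different choice alters it by an inner automorphism, and a diffeomorphism isotopic to the identity induces the trivial outer class, so $f\mapsto[f_*]$ descends to $\Phi$. An isometry of the complete finite-volume hyperbolic metric on $\Int(E_K)$ permutes the torus cusps and extends over the usual compactification to a self-diffeomorphism of $E_K$, which defines $\iota$; here one uses that in dimension three every homeomorphism is isotopic to a diffeomorphism, so $\Mcg(E_K)$ is the same whether computed with $\Homeo$ or $\Diff$. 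By construction the composite $\Phi\circ\iota\colon\Isom(E_K)\to\Out(G_K)$ is the canonical map sending an isometry to the outer class of its induced automorphism.

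Two rigidity inputs then finish the argument. First, \emph{Mostow--Prasad rigidity} asserts that for the complete finite-volume hyperbolic $3$-manifold $\Int(E_K)$ every automorphism of $\pi_1$ is induced by a unique isometry; this is precisely the statement that $\Phi\circ\iota$ is an isomorphism, so $\iota$ is injective and $\Phi$ is surjective. Second, the \emph{uniqueness half of Waldhausen's theorem} applies because $E_K$, the exterior of a nontrivial knot, is Haken --- irreducible with incompressible torus boundary --- so homotopic self-homeomorphisms of $E_K$ are isotopic. Since $E_K$ is aspherical, $\Phi([f])=1$ forces $f_*$ to be inner, hence $f$ to be freely homotopic to $\id_{E_K}$, hence isotopic to $\id_{E_K}$; thus $\Phi$ is injective. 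Being injective and surjective, $\Phi$ is an isomorphism, and then $\iota=\Phi^{-1}\circ(\Phi\circ\iota)$ is an isomorphism as well, which yields $\Mcg(E_K)\cong\Out(G_K)\cong\Isom(E_K)$.

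I expect the main obstacle to lie in the peripheral bookkeeping hidden inside the two constructions above: one must check with care that an isometry of the \emph{open} manifold $\Int(E_K)$ genuinely induces a diffeomorphism of the \emph{compact} manifold-with-boundary $E_K$ (using that its ends are standard cusps on which the isometry acts affinely), and that Waldhausen's rigidity theorem is invoked in the appropriate boundary-irreducible form valid for manifolds with incompressible boundary. Once $\Phi$, $\iota$, and their compatibility with the canonical map $\Isom(E_K)\to\Out(G_K)$ are pinned down, the remainder is a formal diagram chase. As an alternative to using Mostow--Prasad for surjectivity of $\Phi$, one could appeal instead to the existence half of Waldhausen's theorem after first observing that hyperbolicity forces every automorphism of $G_K$ to preserve the peripheral $\ZZ^2$ subgroup up to conjugacy; routing surjectivity through Mostow--Prasad sidesteps this peripheral argument entirely.
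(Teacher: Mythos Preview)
Your proposal is correct and follows essentially the same route the paper indicates: the paper does not give a self-contained proof but simply attributes $\Mcg(E_K)\cong\Out(G_K)$ to Waldhausen's theorem on Haken manifolds and $\Out(G_K)\cong\Isom(E_K)$ to Mostow--Prasad rigidity, which are precisely your two ingredients. Your write-up is in fact more detailed than the paper's, since you spell out the maps $\Phi$ and $\iota$, the factorization $\Phi\circ\iota$, and the peripheral/cusp bookkeeping needed to pass between $\Int(E_K)$ and $E_K$; the only small imprecision is that Mostow--Prasad gives a bijection between \emph{outer} automorphism classes and isometries (not automorphisms and isometries), but you use it in exactly that form when concluding that $\Phi\circ\iota$ is an isomorphism.
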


In Theorem \ref{thm-Mostow}, the isomorphism $\Mcg(E_K) \cong \Out(G_K)$ is a special case of \cite[Corollary 7.5]{Waldhausen-1968}, and the isomorphism 
$\Out(G_K) \cong \Isom(E_K)$ makes use of Mostow's rigidity theorem \cite{Mostow},
as extended to complete, finite volume hyperbolic manifolds by Prasad \cite{Prasad}.

Assume that $K$ admits a $p$-periodic virtual knot diagram.
Such a diagram gives rise to an automorphism $\phi \colon G_K \to G_K$ which, by Proposition \ref{prop-order}, must have order $p$.
The fundamental group of any complete, finite volume hyperbolic manifold is torsion free and has trivial center
and so $G_K$, when $K$ is a hyperbolic knot, has these properties.
This observation allows us to conclude that the image of $\phi$ in $\Out(G_K)$ also has order $p$, and Theorem \ref{thm-Mostow} applies to show that
$\phi$ corresponds to an isometry
 $\phi_* \in \Isom(E_K)$ of order $p$.
There is a flat torus $T$ obtained as a cross section of the cusp associated to the hyperbolic structure on $\Int(E_K)$ and a compact submanifold $E'_K \subset \Int(E_K)$,  with $T = \partial E'_K$,
that is also an exterior for $K$ and is invariant under $\varphi_*$.
By \cite[Theorem 2]{Luo-1992},  $(\varphi_*)|_{E'_K}$ extends to an order $p$ diffeomorphism  ${\widehat \varphi}_ *\colon S^3 \rightarrow S^3$ preserving $K$.
(Alternatively, one can conclude this from the main theorem of \cite{Gordon-Luecke-1989b} as follows. The diffeomorphism $(\varphi_*)|_{E'_K} \colon E'_K \to E'_K$  maps a meridian of $K$ to another meridian for $K$, and 
any $p$-periodic diffeomorphism defined on the boundary torus $\partial V$ of a solid torus $V$ which sends a meridian to another meridian  can be extended to a $p$-periodic diffeomorphism of $V$.)
Furthermore, Proposition \ref{good-guys-win} can be used to show  ${\widehat \varphi}_ *$ is orientation preserving.
While this argument produces an orientation preserving symmetry of $K$ of order $p$, it does not immediately exclude the possibility that ${\widehat \varphi}_ *$ could be a free symmetry.
That exclusion is accomplished using our
Theorem \ref{thm:fixedpoint} as in the proof of Theorem \ref{thm-main} given in \ref{sec:proof}.

\setcounter{section}{1} \noindent
\subsection{Periodic virtual knots and Wirtinger presentations}\label{sec:periodic-virtual}
Let $K$ be a virtual knot, and let $G_K$ denote its knot group. Given a virtual knot diagram for $K$, we obtain a Wirtinger presentation for $G_K$, and the isomorphism class of $G_K$ and its peripheral structure $P_K$ are invariants of the underlying welded equivalence class of $K$ \cite{Kim-2000}. If $K$ is classical, then $G_K = \pi_1(E_K)$, the fundamental group of the exterior of the knot $E_K = S^3 \sm \Int(N(K))$, and $P_K$ is the conjugacy class of the image of the fundamental group of $\partial E_K$ in $G_K$.

Suppose $K$ is a classical or virtual knot and $D$ is a $p$-periodic virtual knot diagram representing $K$.
The periodic transformation of the diagram induces an automorphism $\phi \colon G_K \to G_K$ as follows. Recall that virtual knots can be represented as knots in thickened surfaces $\Si \times I$. From this point of view, the knot group is the fundamental group of the complement of $K$ in the cone space $C(\Si \times I)=\Si \times I/\!\!\sim$ obtained by identifying $(x,1) \sim (y,1)$ for all $x,y \in \Sigma$. Notice that the periodic transformation of the diagram defines a homeomorphism of $C(\Si \times I)$ fixing the cone point, which we take to be the base point in the fundamental group.

Using symmetry, it follows that $D$ must have $np$ real crossings and also $np$ arcs, for some $n >0$, which are paths from one real undercrossing to the next undercrossing, ignoring virtual crossings.
We denote the quotient knot by $K_\coAsterisk$, it is represented by the diagram $D_\coAsterisk$ obtained by taking the quotient of $D$ by the $\ZZ/p$ action.
Notice that $D_\coAsterisk$ has $n$ real crossings.

We will use the symmetry of the diagram to produce a particularly useful Wirtinger presentation for the knot group $G_K$, following the approach given in \cite[Ch. 14D]{BZH-2014}, cf. \cite[\S 3.1]{Boden-Nicas-White-2017}. 
Let $$c_{1,0}, \ldots, c_{n,0}, c_{1,1}, \ldots, c_{n,1}, 
\ldots, c_{1,p-1}, \ldots, c_{n,p-1}$$  
denote the undercrossings of $D$ in the order in which they are encountered as one travels around the knot, and 
let  $\ep_{i,j}$ denote the writhe of $c_{i,j}$ for $1\leq i \leq n$ and $0 \leq j \leq p-1.$
Notice that by symmetry, $\ep_{i,j} = \ep_{i,j+1}$ for $0 \leq j \leq p-1,$ so we will write $\ep_i$ instead of $\ep_{i,j}.$
We choose the generator $\phi$ of $\ZZ/p$ that acts on the diagram $D$ by sending $c_{i,j}$ to $c_{i,j+1}$, where $1\leq i\leq n$ and $0\leq j \leq p-1$ and $j+1$ is taken mod $p$ (i.e., if $j=p-1,$ then $j+1$ is taken to equal $0$).

Using this setup, we can label the arcs of the diagram $D$ 
$$a_{1,0}, \ldots, a_{n,0}, a_{1,1}, \ldots, a_{n,1}, 
\ldots, a_{1,p-1}, \ldots, a_{n,p-1}$$ so that the arc $a_{i,j}$ starts at the crossing $c_{i-1,j}$ and ends at the crossing $c_{i,j}$ for $i=2, \ldots, n$. (In case $i=1$, the arc $a_{1,j}$ starts at $c_{n,j-1}$ and ends at $c_{1,j}$.) The automorphism acts on the arcs by sending $a_{i,j}$ to $a_{i,j+1}$ for 
$1\leq i\leq n$ and $0\leq j \leq p-1$ with $j$ taken mod $p$ as above.  

For example, Figure \ref{example} depicts a 3-periodic diagram of a virtual knot with crossings $c_{i,j}$ and arcs $a_{i,j}$ labelled according to our conventions. Under clockwise rotation of $2 \pi/3$ about the center, the automorphism $\phi$ acts by sending the crossing $c_{i,j}$ to $c_{i,j+1}$ 
and the arc $a_{i,j}$ to $a_{i,j+1}$ for $i=1,2$ and $j=0,1,2$ with $j+1$ taken mod 3.

\begin{figure}[ht]
\includegraphics[scale=0.75]{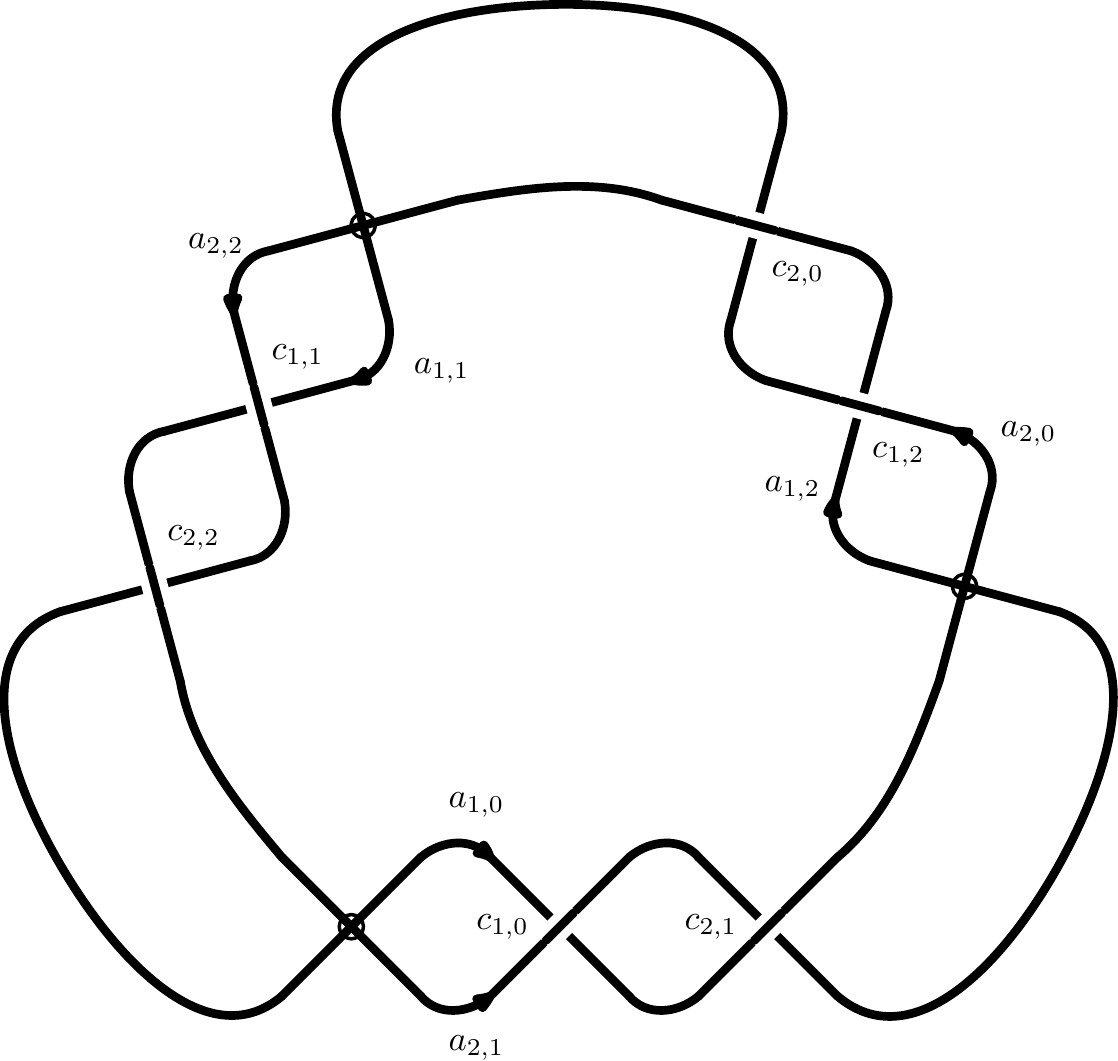}  
\caption{{A 3-periodic virtual knot}} \label{example}
\end{figure}
 
In general, for a periodic diagram, the Wirtinger relation of the crossing $c_{i,j}$ is given by
$$r_{i,j} = (a_{k,\ell})^{-\ep_{i}} \, a_{i,j}^{ } \, (a_{k,\ell})^{\ep_{i}} \, (a_{i+1,j})^{-1} $$
for $i=1,\ldots, n-1$ and $j=0,\ldots, p-1.$
When $i=n$, we get the relation $r_{n,j} = (a_{k,\ell})^{-\ep_{n}} \, a_{n,j}^{ } \,(a_{k,\ell})^{\ep_{n}}(a_{1,j})^{-1}.$

The resulting Wirtinger presentation of the $p$-periodic virtual knot $K$ is then
\begin{equation} \label{Wirtinger-pres-K}
G_K = \langle a_{i,j}  \mid r_{i,j} \rangle,
\end{equation} 
where $1 \leq i \leq n, \; 
0 \leq j \leq p-1$.
This presentation admits a $\ZZ/p$ symmetry, and the Wirtinger presentation for the knot group of the quotient knot $K_\coAsterisk$  is obtained by adding the relations $a_{i,0} = a_{i,1} = \cdots = a_{i,p-1}$ for $1 \leq i \leq n$, which gives the presentation for the quotient group
%\begin{equation} \label{Wirtinger-pres-K*}
$$G_{K_\coAsterisk} = \langle a_1, \ldots, a_n \mid r_{1}, \ldots, r_n \rangle,$$
%\end{equation}  
where $a_i$ refers to the equivalence class $\{a_{i,0}, \ldots, a_{i,p-1}\}$ of generators and $r_i$ is the relation $(a_{k})^{-\ep_{i}} \,a_{i}^{}\, (a_{k})^{\ep_{i}} (a_{i+1})^{-1} $ with $i+1$ taken mod $n$.

Let $\pi \colon G_K \to G_{K_\coAsterisk}$  denote the quotient map.

For an abstract group $G$, we use $\Aut(G)$ to denote its group of automorphisms.

\begin{proposition} \label{prop-order}
Suppose $K$ is a virtual knot with meridian-longitude pair $\mu, \la \in G_K$ such that $\la$ is nontrivial.
If $K$ admits a $p$-periodic virtual knot diagram $D$, then the corresponding automorphism $\phi \in \Aut(G_K)$ has order $p$ and preserves the peripheral system $P_K$.
In fact, $\phi(\mu) =g \mu g^{-1}$ and $\phi(\la) = g \la g^{-1}$ for some $g \in G_K.$  
\end{proposition}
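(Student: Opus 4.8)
I would prove Proposition~\ref{prop-order} working entirely with the Wirtinger presentation~\eqref{Wirtinger-pres-K} and the description of $\phi$ as the cyclic shift $a_{i,j}\mapsto a_{i,j+1}$ (second index mod $p$), taking $\mu=a_{1,0}$ as the chosen meridian and the associated word as the longitude. The easy half comes first: since $\phi$ merely permutes the generating set $\{a_{i,j}\}$ and $\phi^p$ fixes every generator, $\phi^p=\id$, so $\ord(\phi)\mid p$.

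For the peripheral statement and the ``in fact'' clause, I would chain the Wirtinger relations $a_{i+1,j}=(a_{k,\ell})^{-\ep_i}a_{i,j}(a_{k,\ell})^{\ep_i}$ from the arc $a_{1,0}$ to the arc $a_{1,1}$, i.e.\ over one fundamental domain of the $\ZZ/p$-action. This yields $a_{1,1}=w^{-1}\mu\,w$, where $w$ is the explicit word obtained as the product, with writhe signs, of the over-meridians recorded at the crossings $c_{1,0},\dots,c_{n,0}$; hence $\phi(\mu)=w^{-1}\mu\,w$. By the $p$-fold symmetry, the longitude word read off once around $D$ is the telescoping product $\lambda_0=w\,\phi(w)\,\phi^2(w)\cdots\phi^{p-1}(w)$, since the over-meridian at $c_{i,j}$ is the $\phi^j$-image of the one at $c_{i,0}$. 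Applying $\phi$ and using $\phi^p=\id$ collapses this to $\phi(\lambda_0)=\phi(w)\cdots\phi^{p-1}(w)\,w=w^{-1}\lambda_0\,w$. Since the longitude is $\lambda=\lambda_0\mu^{-e}$ with $e$ the exponent sum of $\lambda_0$, one gets $\phi(\lambda)=w^{-1}\lambda_0 w\cdot(w^{-1}\mu w)^{-e}=w^{-1}\lambda\,w$. Thus $g:=w^{-1}$ conjugates both $\mu$ and $\lambda$ onto their $\phi$-images, so $\phi$ preserves $P_K$. (I would also record here that chaining the relations all the way around once gives $\mu=\lambda_0^{-1}\mu\lambda_0$, so $[\mu,\lambda]=1$ in $G_K$.)

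The place where the hypothesis $\lambda\neq1$ enters is the claim $\ord(\phi)=p$. Suppose instead $\ord(\phi)=d$ with $d\mid p$ and $q:=p/d\geq2$, so $\phi^d=\id$; then $a_{i,j}=a_{i,j+d}$ in $G_K$ for all $i,j$. Consequently $\phi^d$ fixes $w$ and all its $\phi$-translates, so with $v:=w\,\phi(w)\cdots\phi^{d-1}(w)$ the product for $\lambda_0$ groups into $q$ identical blocks, giving $\lambda_0=v^q$; moreover $\phi^d(\mu)=\mu$ says exactly that $v$ commutes with $\mu$, and $e=q\cdot(\text{exponent sum of }v)$. Hence $\lambda=v^q\mu^{-e}=\beta^q$ with $q\geq2$, where $\beta:=v\,\mu^{-e/q}$ commutes with $\mu$ (hence with $\lambda=\beta^q$) and has zero exponent sum. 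It then remains to contradict $\lambda\neq1$, i.e.\ to see that a nontrivial longitude cannot be a proper power: since $\langle\mu,\lambda\rangle$ is abelian with $\mu$ of infinite order mapping to a generator of $H_1(G_K)=\ZZ$, and $\beta$ commutes with $\lambda$, I would argue that $\beta$ lies in the peripheral subgroup; writing $\beta=\mu^a\lambda^b$ and comparing $\lambda=\mu^{aq}\lambda^{bq}$ in $H_1$ forces $a=0$, whence $\lambda^{bq-1}=1$, and $\lambda$ having infinite order gives $q=1$, a contradiction.

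The main obstacle is precisely this last step. For a classical knot ``$\beta$ lies in the peripheral subgroup'' and ``$\lambda$ has infinite order'' are standard, because the centralizer of a nontrivial peripheral element is the peripheral $\ZZ^2$; for a general virtual knot these require a separate argument about the structure of $G_K$ near its peripheral subgroup (equivalently, a direct verification that a nontrivial longitude of a virtual knot is not a proper power, perhaps read off from the periodic presentation~\eqref{Wirtinger-pres-K} itself). Everything else in the argument is formal bookkeeping with that presentation.
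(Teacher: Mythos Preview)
Your derivation of the ``in fact'' clause---that $\phi(\mu)=w^{-1}\mu w$ and $\phi(\lambda)=w^{-1}\lambda w$ via the telescoping word $\lambda_0=w\,\phi(w)\cdots\phi^{p-1}(w)$---is correct and is actually more explicit than what the paper provides; the paper states this conclusion but does not spell it out in the proof.

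Where your argument genuinely stalls is exactly where you say it does. Both you and the paper arrive at the same intermediate fact: if $\phi$ had order $d$ properly dividing $p$, then the longitude would be a $q$-th power ($q=p/d>1$) of an element commuting with $\mu$. You then try to trap that root inside the peripheral subgroup via a centralizer argument and invoke infinite order of $\lambda$. As you note, for virtual knots there is no available statement that the centralizer of $\langle\mu,\lambda\rangle$ equals $\langle\mu,\lambda\rangle$, nor that $\lambda$ has infinite order in $G_K$, so this step does not close.

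The paper avoids this entirely by passing to geometry rather than staying inside $G_K$. It uses the thickened-surface model: $\mu$ and $\lambda$ are represented by actual curves on the boundary torus $T^2\subset\partial(\Sigma\times I\smallsetminus\Int N(K))$, where the algebraic intersection number satisfies $[\mu]\cdot[\lambda]=1$. Under the assumption $\phi=\id$ the quotient map $\pi\colon G_K\to G_{K_\coAsterisk}$ is an isomorphism with $\pi(\lambda)=(\lambda^{\coAsterisk})^{p}$, so $\lambda$ acquires a $p$-th root $\zeta$, and then $[\mu]\cdot[\lambda]=p\,[\mu]\cdot[\zeta]$ is divisible by $p$, contradicting $[\mu]\cdot[\lambda]=1$. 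The general case $d\mid p$ is then bootstrapped by observing that $K$ is also $(p/d)$-periodic with automorphism $\phi^d$ and rerunning the argument. So the missing idea in your approach is precisely this homological input from the torus boundary of the thickened-surface realization; it replaces the peripheral-centralizer fact you would need and is what makes the argument go through for virtual (not just classical) knots.
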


\begin{proof} Clearly $\phi^p = \id$, so it suffices to show that $\phi^k \neq \id$ for any $1 \leq k < p.$  
We first claim that $\phi \neq \id$. We show this by contradiction. Assuming that $\phi = \id$, it follows that the generators and the relations for $G_K$ in the presentation \eqref{Wirtinger-pres-K} satisfy $a_{i,j} = a_{i,j+1}$ and $r_{i,j} = r_{i,j+1}$ for $1\leq i\leq n$. These observations reveal that if $\phi = \id$, then the quotient map
$\pi\colon G_K \to G_{K_\coAsterisk}$ is an isomorphism of groups.

On the other hand, choose a  meridian $\mu$ and  longitude $\lambda$ for $K$ as follows.
Let $\mu =a_{1,0} \in G_K$, and starting on the arc $a_{1,0}$ and going once around the knot, let $\omega$ be the word obtained by recording the overcrossing arc $(a_{k,\ell})^{\varepsilon_i}$ according to its writhe $\varepsilon_i$ at each undercrossing $c_{i,j}$.
We then set $\lambda = \omega \, (a_{1,0})^{-m}$, where $m$ denotes the exponential sum of $\omega$.
Notice that the homology classes of $\mu, \lambda \in G_K$ can be represented by curves in $T^2 \subset \partial(\Sigma \times I \sm \Int(N(K)))$ 
with $[\mu]\cdot [\lambda] = 1$. 
We can choose a compatible meridian and longitude $\mu^\coAsterisk, \lambda^\coAsterisk \in G_{K_\coAsterisk}$  for $K_\coAsterisk$
such that $\pi(\mu) = \mu^\coAsterisk$ and $\pi(\lambda) =  (\lambda^\coAsterisk)^p.$
%(This last fact follows from the fact that $\pi(a_{i,j}) = a_i$ and the simple observation that $\lambda = \prod_{j=0}^{r-1} a_{1,j} \cdot a_{2,j} \cdots a_{n,j}$ and
%$\lambda_* = a_{1} \cdot a_{2} \cdots a_{n}$.)

However, since $\pi$ is an isomorphism, there is an element $\zeta \in G_K$ with $\pi(\zeta) = \lambda^\coAsterisk$ and $\zeta^p = \lambda$. Thus $\lambda$ admits a $p$-th root. 
In that case, $[\mu]\cdot [\lambda]= p [\mu]\cdot [\zeta]$ is a multiple of $p$, which contradicts the fact that $[\mu]\cdot [\lambda]=1.$

Now suppose $\phi \in \Aut(G_K)$ has order $k$ for some $1<k<p.$
If $k$ is relatively prime to $p,$ then writing $1 = \ell k + m p$, it follows that
$\phi= \id$, which is a contradiction. The only other possibility is when $p=\ell k$ for some $\ell$. In this case, notice that $K$ is $\ell$-periodic, and set $\phi'=\phi^k$. Arguing as before shows that $\phi' \neq \id$, and it follows that $\phi$ cannot have order $k$.
\end{proof}
The hypothesis in Proposition \ref{prop-order} that the longitude is nontrivial is essential.
There are nontrivial virtual knots $K$ with $G_K \cong \ZZ$.
In that case, the longitude is trivial and the automorphism arising from a periodic diagram is also trivial, see Figure \ref{kishino} for a specific example.

\begin{figure}[ht]
\includegraphics[scale=2.0]{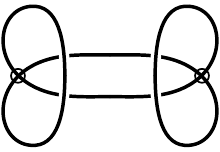}  
\caption{{The Kishino knot (pictured) is 2-periodic and has infinite cyclic knot group.}} \label{kishino}
\end{figure}

\begin{question}
Does there exist a periodic virtual or welded (non-classical) knot with ``nontrivial'' knot group, that is $G_K \not\cong \ZZ$, and trivial longitude for which the induced automorphism $\phi \colon G_K \to G_K$ is the identity?
\end{question}

S.-G.~Kim has shown that any classical knot group is the group of a (non-classical) virtual knot with trivial longitude, see \cite[Corollary 7]{Kim-2000}.

\begin{question}\label{qu:exotic}
Does there exist a periodic virtual or welded knot whose knot group is isomorphic to that of a classical knot such that the induced automorphism
is not conjugate to any automorphism arising from a symmetry of the classical knot?
\end{question}

Note that any automorphism of the knot group of a prime classical knot preserves its peripheral structure, (see \cite[Corollary 4.1]{Tsau}),
and so the knot group in  \hbox{Question \ref{qu:exotic}} %WARNING TeX hack
should be that of a composite classical knot.

\setcounter{section}{2}
\subsection{The group of outer automorphisms}
Given a group $G$,
an {\it inner automorphism} is an element of $\Aut(G)$ of the form $c_g\colon G \rightarrow G$, where $g \in G$ 
and  $c_g(x) =g x g^{-1}$ for all $x \in G$ (conjugation by $g$).
The subset of all inner automorphisms,  $\Inn(G)$, is a normal subgroup of $\Aut(G)$ and the group of {\it outer automorphisms} of $G$ is defined as the quotient
group  $\Out(G) = \Aut(G) / \Inn(G).$
There is an exact sequence
\begin{equation*}
\begin{tikzcd}
1  \arrow{r}{} & Z(G)  \arrow{r}{i} &  G  \arrow{r}{C} & \Inn(G)   \arrow{r}{} & 1, 
\end{tikzcd}
\end{equation*}
where $Z(G) = \{ x \in G ~|~ gx=xg \text{ for all } g \in G\}$  % fixed typo found by Micah
is the {\it center} of $G$, $i\colon Z(G) \hookrightarrow G$ is inclusion
and $C(g) = c_g$ for $g \in G$.
Hence $g \in Z(G)$ if and only if $c_g = \id_{G}$.

Let $Q  \colon \Aut(G) \rightarrow \Out(G)$ denote the quotient homomorphism.
The following proposition will be used in the proof of Theorem \ref{thm:lift_three}.

\begin{proposition}\label{prop:torfreecenterless}
Let $G$ be a torsion free group with trivial center.
If $\varphi \in \Aut(G)$ has finite order $n \geq 1$, then $Q(\varphi) \in \Out(G)$ also has order $n$.
\end{proposition}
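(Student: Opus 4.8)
The plan is to argue by contradiction: suppose $Q(\varphi)$ has order $m$ with $m < n$ (it certainly divides $n$ since $\varphi^n = \id$ forces $Q(\varphi)^n = \id$). Then $\varphi^m \in \Inn(G)$, say $\varphi^m = c_g$ for some $g \in G$. The idea is to leverage the relation between $\varphi$ and $c_g$ to locate a nontrivial finite-order element of $G$, contradicting torsion-freeness — using centerlessness to guarantee the element is nontrivial.

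First I would compute how $\varphi$ interacts with $g$. Applying $\varphi$ to the identity $\varphi^m = c_g$ and using $\varphi c_g \varphi^{-1} = c_{\varphi(g)}$, we get $c_g = \varphi^m = \varphi \varphi^m \varphi^{-1} = \varphi c_g \varphi^{-1} = c_{\varphi(g)}$, so $c_{\varphi(g)} = c_g$. Since $G$ has trivial center, the map $C\colon G \to \Inn(G)$ is injective, hence $\varphi(g) = g$. Next, write $n = mk$ for some integer $k \geq 2$ (here I use that $m \mid n$; if $m \nmid n$ one can replace $m$ by $\gcd(m,n)$, or note $Q(\varphi)^n=\id$ directly gives $m\mid n$). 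Then $\id = \varphi^n = (\varphi^m)^k = (c_g)^k = c_{g^k}$, so again by injectivity of $C$ we conclude $g^k \in Z(G) = \{1\}$, i.e. $g^k = 1$. Since $G$ is torsion free, this forces $g = 1$, hence $\varphi^m = c_g = \id_G$. But $\varphi$ has order exactly $n > m$, a contradiction. Therefore $Q(\varphi)$ has order exactly $n$.

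The only genuinely delicate point — and the one I would state carefully — is the passage from $\varphi^m = c_g$ to $\varphi(g) = g$ and then to $g^k = 1$; both steps rely crucially on the injectivity of $C\colon G \to \Inn(G)$, which is exactly the content of the hypothesis $Z(G) = 1$ (recall from the exact sequence that $\ker C = Z(G)$). Torsion-freeness is then invoked precisely once, to kill $g$. I expect no real obstacle here: the argument is short and self-contained, the main thing being to organize the three applications of "$C$ injective" cleanly. One minor caveat worth noting is the degenerate case $n = 1$, where the claim is trivial, so we may assume $n \geq 2$ throughout.
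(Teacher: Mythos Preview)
Your proof is correct and follows essentially the same route as the paper's: let $m$ be the order of $Q(\varphi)$, write $\varphi^m = c_g$, raise to the $(n/m)$-th power, and use trivial center plus torsion-freeness to force $g=1$. Your intermediate computation that $\varphi(g)=g$ is correct but superfluous --- it is never invoked in the remainder of the argument, so you can delete that paragraph and the proof still goes through verbatim.
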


\begin{proof}
Let $m$ be the order of $Q(\varphi)$. 
Note that $m$ divides $n$ and $\varphi^m = c_g$ for some $g \in G$.
Let $s = n/m$. 
Then  $\id_G = \varphi^n = \varphi^{ms} = (c_g)^s = c_{g^s}.$
Since the center of $G$ is trivial it follows that $g^s=1$ and so $g=1$ because $G$ is torsion free.
Hence $\varphi^m =1$ and so $m=n$.
\end{proof}

The following ``relative'' versions of automorphism and outer automorphism groups
are relevant in knot theoretic applications.
Let  $P$ be the conjugacy class of some subgroup $H$ of $G$, that is,
$P = \{g H g^{-1} ~|~ g \in G\}$.
Let $\Aut(G, P)$ be the subgroup of $\Aut(G)$ consisting of all automorphisms that preserve $P$.
Observe that $\Inn(G)$ is a normal subgroup of $\Aut(G, P)$.
Let $\Out(G, P) = \Aut(G, P) / \Inn(G)$.
Note that $\Out(G, P)$ is a subgroup of $\Out(G)$.

\setcounter{section}{3}
\subsection{Automorphisms of knot groups} \label{section:autom}\label{Out-groups}

Given a  knot $K$ in $S^3$,
the inclusion $\partial E_K \hookrightarrow E_K$ together with a choice of basepoint 
induces a homomorphism on fundamental groups that is injective if $K$ is not the unknot.
The conjugacy class of the image of this  homomorphism, which we denote by $P_K$,
is the {\it peripheral system} associated to $K$.
It is known that the pair $(G_K, P_K)$ is a complete knot invariant,
that is, two (classical) knots $K$ and $K'$ are equivalent if and only if the pairs $(G_K, P_K)$ and $(G_{K'}, P_{K'})$ are isomorphic.

A homeomorphism $F \colon E_K \rightarrow E_K$ induces an isomorphism of relative homology groups
$F_* \colon H_3(E_K,\partial E_K) \rightarrow H_3(E_K,\partial E_K)$.
Since $E_K$ is a compact, connected orientable $3$-manifold,  the homology group
$H_3(E_K,\partial E_K)$ is infinite cyclic and so $F_*$
 is either the identity, in which case we say $F_*$ is {\it orientation preserving}, or $-1$ times the identity,
in which case we say $F_*$ is {\it orientation reversing}.
By Alexander Duality, $E_K$ is a homology circle and thus $H_2(E_K) = H_3(E_K) = 0$.
The homology long exact sequence of the pair $(E_K,\partial E_K)$, together with its  naturality, yield a commutative square
\begin{equation*}%\label{xxx}
\begin{tikzcd}
H_3(E_K,\partial E_K) \arrow[r, "F_*"] \arrow[d,"\delta"']
& H_3(E_K,\partial E_K) \arrow[d, "\delta" ] \\
H_2(\partial E_K) \arrow[r,"{\left( F|_{\partial E_K} \right)_*}"]
& H_2(\partial E_K), \
\end{tikzcd}
\end{equation*}
where the connecting homomorphism $\delta \colon H_3(E_K,\partial E_K) \rightarrow H_2(\partial E_K)$ is an isomorphism.
It follows that $F_*$ is the identity (respectively, $-1$ times the identity) 
if and only if $\left( F|_{\partial E_K} \right)_*$ is the identity (respectively, $-1$ times the identity),
that is, $F$ is orientation preserving (respectively, reversing) if and only if
$F|_{\partial E_K}$ is orientation preserving (respectively, reversing).

Assume that $K$ is not the unknot.
We assign to an automorphism $\varphi \in \Aut(G_K, P_K)$ the status ``orientation preserving'', respectively, ``orientation reversing'', as follows.
Let $H \in P_K$. 
There exists $g \in G_K$ such that $\varphi(H)= g^{-1} H g$.
The composition in group homology
\begin{equation}\label{eq:opreserving}
\begin{tikzcd}
H_2(H)  \arrow{r}{(\varphi|_H)_*} & H_2(g^{-1}Hg) \arrow{r}{{(c_{g})_*}} & H_2(H)
\end{tikzcd}
\end{equation}
is an isomorphism.
This isomorphism is  independent of  the element $g$ satisfying $\varphi(H)= g^{-1} H g$  because if
$g_1^{-1} H g_1 = g_2^{-1} H g_2$, then $u = g_1 g_2^{-1} \in N_{G_K}(H)$,
the normalizer of $H$ in $G_K$, and hence $ u \in H$
since $N_{G_K}(H) = H$ by \cite[Corollary, p.\,148]{Heil-1981}
(this is also a consequence of \cite[Theorem 2]{Simon-1976}).
Observe that  $g_1 = u g_2$ and so $c_{g_1} = c_{ug_2} = c_u c_{g_2}$.
Since $H$ is abelian and $u \in H$, $c_u = \id_H$ and so $c_{g_1}= c_{g_2}$.

Note that  $H_2(H) \cong H_2(\partial E_K)$ is infinite cyclic.
Hence the isomorphism \eqref{eq:opreserving} is either the identity, in which case  we say $\varphi$  is  {\it orientation preserving},
or $-1$ times the identity, in which case  we say $\varphi$  is {\it orientation reversing}.
It is straightforward to show that our definition 
of orientation preserving or reversing for $\varphi$ is independent of the choice of $H \in P_K$.
The set of all orientation preserving automorphisms is a subgroup, denoted $\Aut^+(G_K, P_K)$, of $\Aut(G_K, P_K)$
of index at most $2$.
Observe that any inner automorphism  $\varphi = c_g$ is orientation preserving because $c_{g^{-1}} \varphi = c_{g^{-1}} c_g = \id_H$,
and so orientation preserving and orientation reversing are well-defined notions for outer automorphisms.
We denote the subgroup of orientation preserving outer automorphisms by $\Out^+(G_K, P_K)$.

The geometrically and algebraically defined versions of orientation preserving (or reversing)  are related as follows.
Let $x_0 \in \partial E_K$ be a basepoint and let
\[
H= \operatorname{image}\left(i_*\colon \pi_1(\partial E_K,x_0) \rightarrow \pi_1(E_K,x_0)\right) \subset G_K = \pi_1(E_K, x_0).
\]
If $F \colon E_K \rightarrow E_K$ is a homeomorphism and $\sigma$ is a path in $E_K$  from $x_0$ to $F(x_0)$, then 
the composition
\begin{equation*}
\begin{tikzcd}
\pi_1(E_K,x_0) \arrow{r}{F_*} &  \pi_1(E_K,F(x_0)) \arrow{r}{\sigma^{-1}_{\#}} & \pi_1(E_K,x_0)
\end{tikzcd}
\end{equation*}
is an automorphism $\varphi \in \Aut(G_K, P_K)$, where $P_K$ is the conjugacy class of $H$.
Note that if the path $\sigma$ lies in $\partial E_K$, then $\varphi(H) = H$.
There is a commutative diagram
\begin{equation} \label{eq:hur}
\begin{tikzcd}
\pi_1(\partial E_K, x_0) \arrow[r, "(F|_{\partial E_K})_*"]   \arrow[d,"h"'] & 
\pi_1(\partial E_K, F(x_0)) \arrow[r, "\sigma^{-1}_{\#}" ]     \arrow[d,"h"'] &
\pi_1(\partial E_K, x_0)  \arrow[d,"h"']  & \\
H_1(\partial E_K)  \arrow[r,"(F|_{\partial E_K})_*"] & 
H_1(\partial E_K) \arrow[r, equals] & 
H_1(\partial E_K),
\end{tikzcd}
\end{equation}
where $h$ denotes the Hurewicz homomorphism, which is an isomorphism in the above diagram because $\pi_1(\partial E_K, x_0)$ is abelian.
Since $\partial E_K$ is a $2$-torus,
the Pontryagin product gives an isomorphism
\begin{equation}\label{eq:exterior}
\Lambda^2 H_1(\partial E_K) ~\longrightarrow~ H_2(\partial E_K),
\end{equation}
where $\Lambda^2 H_1(\partial E_K)$ is the second exterior power of $H_1(\partial E_K)$.
It follows from \eqref{eq:opreserving},  \eqref{eq:hur} and \eqref{eq:exterior} that $\phi$ is orientation preserving (respectively, reversing)
if and only if $F$ is  orientation preserving (respectively, reversing);
moreover, $F$  is orientation preserving if and only if 
$\det \left( (F|_{\partial E_K})_* \colon H_1(\partial E_K) \rightarrow H_1(\partial E_K)\right) = 1$,
equivalently, $\varphi$  is orientation preserving if and only if
$\det \left( c_{g} \circ {\varphi|_H} \colon H \rightarrow H \right) = 1$.

\begin{proposition}\label{good-guys-win}
Let $K$ be a classical knot that admits  a $p$-periodic  virtual knot diagram $D$ and let $\varphi \in \Aut(G_K, P_K)$
be the automorphism associated to $D$.
Then $\varphi$ is orientation preserving.
\end{proposition}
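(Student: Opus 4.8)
The plan is to compute the relevant orientation invariant directly from the Wirtinger presentation \eqref{Wirtinger-pres-K}, using the explicit meridian and longitude chosen in the proof of Proposition \ref{prop-order}. Recall that for an automorphism $\varphi \in \Aut(G_K, P_K)$, orientation preserving means that the composite \eqref{eq:opreserving} is the identity on $H_2(H) \cong \ZZ$; equivalently, as noted just before the statement, $\varphi$ is orientation preserving if and only if $\det\left(c_g \circ \varphi|_H \colon H \to H\right) = 1$, where $\varphi(H) = g^{-1}Hg$. So the task reduces to a $2\times 2$ determinant computation on $H_1(\partial E_K) \cong \ZZ\langle [\mu], [\lambda]\rangle$.

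First I would recall from Proposition \ref{prop-order} that the automorphism $\varphi$ associated to $D$ satisfies $\varphi(\mu) = g\mu g^{-1}$ and $\varphi(\lambda) = g\lambda g^{-1}$ for some $g \in G_K$; in particular $\varphi(H) = gHg^{-1}$, so we may take the conjugating element to be $g^{-1}$ in the notation above, and the relevant map $c_{g^{-1}} \circ \varphi|_H$ sends $\mu \mapsto \mu$ and $\lambda \mapsto \lambda$. Hence on $H_1(\partial E_K)$ it induces the identity matrix, whose determinant is $1$. Therefore $\varphi$ is orientation preserving. The only thing to be careful about is that the peripheral subgroup $H$ is genuinely the image of $\pi_1(\partial E_K)$ with $\mu,\lambda$ a basis of $H_1$ with intersection number $1$, so that the determinant criterion applies as stated — but this is exactly the setup recorded in the paragraph preceding the proposition, together with the observation in the proof of Proposition \ref{prop-order} that $[\mu]\cdot[\lambda] = 1$.

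The main (and essentially only) obstacle is making sure that the element $g$ produced in Proposition \ref{prop-order} conjugates $\mu$ and $\lambda$ \emph{by the same} $g$, not merely up to independent conjugations — because otherwise the two rows of the matrix would be computed with respect to incompatible identifications of $H$ with its conjugate, and the determinant would not be directly meaningful. This is precisely why Proposition \ref{prop-order} is stated with the strong conclusion $\phi(\mu) = g\mu g^{-1}$ \emph{and} $\phi(\lambda) = g\lambda g^{-1}$ with a single $g$: it guarantees that $\varphi$ restricted to the peripheral torus is, after correcting by one inner automorphism $c_{g^{-1}}$, literally the identity on $\pi_1(\partial E_K)$ up to basepoint-change by a path in $\partial E_K$, and hence induces the identity on $H_1(\partial E_K)$. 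Once this is in hand, the determinant is $\det(I) = 1$ and the conclusion follows immediately from the algebraic-geometric dictionary established above.
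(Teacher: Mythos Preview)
Your argument is correct and essentially identical to the paper's own proof: both invoke Proposition~\ref{prop-order} to get $\varphi(\mu)=g\mu g^{-1}$ and $\varphi(\la)=g\la g^{-1}$ for a single $g$, observe that the composite $c_{g^{-1}}\circ\varphi|_H$ is literally the identity on $H$, and conclude immediately (the paper phrases it via $H_2(H)$, you via the determinant on $H_1$, but these are the same observation). Your discussion of the ``obstacle'' is accurate but already handled by the cited proposition, so no extra work is needed.
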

\begin{proof}
Let $H$ be the peripheral subgroup of $G_K$ generated by the meridian-longitude pair $\mu, \la \in G_K$ as in Proposition \ref{prop-order}.
Also by Proposition \ref{prop-order},
$\phi(\mu) =g^{-1} \mu g$ and $\phi(\la) = g^{-1} \la g$ for some $g \in G_K$.
Hence
\begin{tikzcd}
H  \arrow{r}{\varphi|_H} &  g^{-1}Hg  \arrow{r}{c_g} & H
\end{tikzcd}
is the identity. 
It follows that the induced homomorphism $(c_g \circ \varphi|_H)_*\colon H_2(H) \rightarrow H_2(H)$ is also the identity and so $\varphi$ is orientation preserving.
\end{proof}

\setcounter{section}{4}
\subsection{Lifting periodic homeomorphisms}

We give a criterion for a periodic homeomorphism of a space to lift to a periodic homeomorphism, of the same period, of its universal cover.

\noindent {\it Convention.}
Let  $\alpha$ and $\beta$ be paths, parametrized by the unit interval, in a space $Z$ such that $\alpha(1) = \beta(0)$.
We denote path multiplication by $\alpha \cdot \beta$.
Moreover, to avoid an excess of notation, for paths $\alpha, \alpha'$
we write $\alpha = \alpha'$ to mean $\alpha$ is homotopic to $\alpha'$ keeping endpoints fixed.
The isomorphism of fundamental groups induced by a path $\alpha$ in  $Z$ is denoted by $\alpha_{\#} \colon \pi_1(Z,\alpha(0)) \rightarrow \pi_1(Z,\alpha(1))$.

Let $X$ be a path connected, locally path connected, semi-locally simply connected space
and let $p\colon Y \rightarrow X$ be a universal covering projection.
Let $x_0 \in X$ be a basepoint and $y_0 \in Y$ a basepoint covering $x_0$, that is, $p(y_0) = x_0$.

\begin{proposition}\label{prop:lift_one}
Let $F \colon X \rightarrow X$ be an $n$-periodic homeomorphism where $n > 1$.
Let $\sigma \colon [0,1] \rightarrow X$ be a path from $x_0$ to $F(x_0)$.
Let ${\widetilde \sigma} \colon [0,1] \rightarrow Y$ be the unique lift of $\sigma$ such that ${\widetilde \sigma}(0) = y_0$
and let ${\widetilde F} \colon Y \rightarrow Y$ be the unique lift of $F$ such that ${\widetilde F} (y_0)  = {\widetilde \sigma}(1)$.
Then ${\widetilde F}$ is an $n$-periodic homeomorphism
if and only if the  element $[\tau] \in \pi_1(X,x_0)$ represented by the loop
$\tau = \sigma \cdot (F \circ \sigma) \cdot (F^2 \circ \sigma) \, \cdots  \, (F^{n-1} \circ \sigma)$
is the identity element.
\end{proposition}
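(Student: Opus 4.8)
The plan is to analyze the lift $\widetilde F$ directly by computing its iterates at the basepoint $y_0$, since a covering transformation (and more generally a lift of a homeomorphism to the universal cover) is determined by where it sends a single point. Concretely, I would first record the defining property: $\widetilde F$ is the unique lift of $F$ with $\widetilde F(y_0) = \widetilde\sigma(1)$, where $\widetilde\sigma$ is the lift of $\sigma$ starting at $y_0$. The key observation is that $\widetilde F^k$ is a lift of $F^k$, and I want to identify it by computing $\widetilde F^k(y_0)$.

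The main computation is an inductive one showing that $\widetilde F^k(y_0)$ equals the endpoint of the lift, starting at $y_0$, of the path $\sigma \cdot (F\circ\sigma) \cdots (F^{k-1}\circ\sigma)$. The inductive step uses the fact that applying $\widetilde F$ to a lifted path is the lift of the $F$-image of that path: since $\widetilde F$ covers $F$, the path $\widetilde F \circ \widetilde\gamma$ is the lift of $F\circ\gamma$ starting at $\widetilde F(\widetilde\gamma(0))$. So if $\widetilde F^{k-1}(y_0)$ is the endpoint of the lift of $\gamma_{k-1} := \sigma \cdot (F\circ\sigma)\cdots(F^{k-2}\circ\sigma)$ starting at $y_0$, then $\widetilde F^k(y_0) = \widetilde F(\widetilde F^{k-1}(y_0))$ is the endpoint of $\widetilde F \circ \widetilde{\gamma_{k-1}}$ concatenated appropriately — more carefully, one follows the lift of $\gamma_{k-1}$ from $y_0$, then applies $\widetilde F$ to get the lift of $F\circ\gamma_{k-1}$ starting at $\widetilde\sigma(1)$, and prepends $\widetilde\sigma$; this exhibits $\widetilde F^k(y_0)$ as the endpoint of the lift of $\sigma\cdot(F\circ\gamma_{k-1}) = \sigma\cdot(F\circ\sigma)\cdots(F^{k-1}\circ\sigma) = \gamma_k$ starting at $y_0$. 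Taking $k = n$ and using $F^n = \id_X$, the path $\gamma_n = \tau$ is a loop at $x_0$, and its lift starting at $y_0$ ends at $y_0$ if and only if $[\tau] = 1$ in $\pi_1(X,x_0)$. Thus $\widetilde F^n(y_0) = y_0$ iff $[\tau]=1$.

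To finish, I would argue that $\widetilde F^n(y_0) = y_0$ forces $\widetilde F^n = \id_Y$: since $F^n = \id_X$, the map $\widetilde F^n$ is a deck transformation of $p\colon Y \to X$, and a deck transformation of the universal cover fixing a point is the identity. Conversely if $\widetilde F^n = \id_Y$ then certainly $\widetilde F^n(y_0)=y_0$, so $[\tau]=1$. This gives the "only if" direction as well. Finally, one needs that $\widetilde F$ has order exactly $n$ and not a proper divisor; here I would note that $\widetilde F^k = \id_Y$ would imply $F^k = p \circ \widetilde F^k \circ (\text{section}) = \id_X$ — more precisely $F^k$ is covered by $\id_Y$ hence $F^k = \id_X$ — contradicting that $F$ has order $n$. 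So $\widetilde F$ is $n$-periodic precisely when $[\tau]=1$.

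The main obstacle is bookkeeping with basepoints and the precise concatenation identities for lifts of paths: one must be careful that "the lift of $F\circ\gamma$" is taken with the correct starting point, and that path multiplication interacts correctly with applying $\widetilde F$ (namely $\widetilde F \circ (\alpha\cdot\beta) = (\widetilde F\circ\alpha)\cdot(\widetilde F\circ\beta)$ and $\widetilde F$ sends the $y_0$-lift of $\gamma$ to the $\widetilde\sigma(1)$-lift of $F\circ\gamma$). Everything else — the deck transformation rigidity and the order argument — is standard covering space theory, so the real content is organizing the induction cleanly.
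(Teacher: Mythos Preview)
Your proposal is correct and follows essentially the same route as the paper: the paper likewise observes that $\widetilde F^{\,n}$ is a deck transformation, identifies $\widetilde F^{\,n}(y_0)$ as the endpoint of the lift $\widetilde\tau = \widetilde\sigma \cdot (\widetilde F\circ\widetilde\sigma)\cdots(\widetilde F^{\,n-1}\circ\widetilde\sigma)$ of $\tau$ based at $y_0$, and concludes $\widetilde F^{\,n}=\id_Y$ iff $[\tau]=1$. Your inductive bookkeeping and the explicit remark that $\widetilde F^{\,k}=\id_Y$ forces $F^k=\id_X$ (ensuring the order is exactly $n$) are just spelled-out details of the same argument.
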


\begin{proof}
 Since $F^n = \id_X$,
 $\big({\widetilde F}\big)^n$ is a lift of  $\id_X$, that is, a covering transformation.
 Furthermore, $\big({\widetilde F}\big)^n(y_0) = {\widetilde \tau}(1)$
 where
 ${\widetilde \tau} = {\widetilde \sigma} \cdot ({\widetilde F} \circ {\widetilde \sigma})  \, \cdots  \,(\big({\widetilde F}\big)^{n-1} \circ {\widetilde \sigma})$.
Note that ${\widetilde \tau}$ is the unique lift of $\tau$ such that ${\widetilde \tau}(0) = y_0$.
Hence $\big({\widetilde F}\big)^n$ is the covering transformation corresponding to the element $[\tau] \in \pi_1(X,x_0)$.
It follows that $\big({\widetilde F}\big)^n = \id_Y$ if and only if $[\tau]$ is the identity element.
 \end{proof}

For  $p\colon Y \rightarrow X$  as above we have the following sufficient condition for lifting an $n$-periodic homeomorphism of $X$ to
an $n$-periodic homeomorphism of $Y$.
 
\begin{theorem}\label{thm:lift_two}
Let $F \colon X \rightarrow X$ be an $n$-periodic homeomorphism where $n > 1$.
Assume that the center of $\pi_1(X,x_0)$ is trivial and that $F$ is homotopic to a map
$f \colon X \rightarrow X$ such that $f(x_0) = x_0$ and
that $f_* \colon \pi_1(X,x_0) \rightarrow \pi_1(X,x_0)$ has order $m$, with $m$ dividing $n$, in $\Aut(\pi_1(X,x_0))$.
Then $F$ lifts to an $n$-periodic homeomorphism ${\widetilde F} \colon Y \rightarrow Y$.
\end{theorem}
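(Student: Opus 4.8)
The plan is to produce a concrete lift of $F$ and verify it is $n$-periodic via Proposition \ref{prop:lift_one}. Fix a homotopy $G \colon X \times [0,1] \to X$ with $G_0 = f$ and $G_1 = F$, and let $\sigma$ be the path $t \mapsto G(x_0,t)$, which runs from $f(x_0) = x_0$ to $F(x_0)$. Let $\widetilde\sigma$ be its lift to $Y$ with $\widetilde\sigma(0) = y_0$ and let $\widetilde F \colon Y \to Y$ be the unique lift of $F$ with $\widetilde F(y_0) = \widetilde\sigma(1)$; being a lift of a homeomorphism, $\widetilde F$ is itself a homeomorphism. Because $F^n = \id_X$, the composite $(\widetilde F)^n$ satisfies $p \circ (\widetilde F)^n = p$ and is therefore a covering transformation of $p \colon Y \to X$. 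By Proposition \ref{prop:lift_one}, $\widetilde F$ is $n$-periodic as soon as $(\widetilde F)^n = \id_Y$, so this is what I will establish.

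Identify $\pi_1(X,x_0)$ with the group of covering transformations of $p$ by sending $[\gamma]$ to the covering transformation $d_\gamma$ that carries $y_0$ to the endpoint of the lift of $\gamma$ beginning at $y_0$. The heart of the argument is the identity
\[
\widetilde F \circ d_\gamma \circ \widetilde F^{-1} \;=\; d_{f_*[\gamma]} \qquad \text{for every } [\gamma] \in \pi_1(X,x_0).
\]
Both sides are self-maps of $Y$ covering $F \circ p$, so the equality may be checked at the single point $y_0$. Unwinding the definitions, the left-hand side sends $y_0$ to the endpoint of the lift of $\sigma \cdot (F\circ\gamma)$ starting at $y_0$, while the right-hand side sends $y_0$ to the endpoint of the lift of $(f_*\gamma)\cdot\sigma$ starting at $y_0$. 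As $Y$ is simply connected, these endpoints agree precisely when $\sigma\cdot(F\circ\gamma)$ and $(f_*\gamma)\cdot\sigma$ are homotopic rel endpoints, i.e. when $f_*[\gamma] = [\sigma\cdot(F\circ\gamma)\cdot\overline{\sigma}]$ in $\pi_1(X,x_0)$; and this last equation is exactly the standard relation between the automorphisms of $\pi_1(X,x_0)$ induced by the homotopic maps $f$ and $F$, with $\sigma$ serving as the track of the basepoint along $G$. I expect this basepoint bookkeeping — in particular keeping straight the direction of the change-of-basepoint isomorphism associated to $\sigma$, and the role of the hypothesis $f(x_0)=x_0$ — to be the only delicate point.

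Granting the identity, conjugation by $\widetilde F$ on the group of covering transformations corresponds, under the identification above, to $f_* \in \Aut(\pi_1(X,x_0))$, which has order $m$ dividing $n$. Hence conjugation by $(\widetilde F)^n$ corresponds to $(f_*)^n = \id$, so $(\widetilde F)^n$ commutes with every covering transformation. But $(\widetilde F)^n$ is itself a covering transformation, so it lies in the center of the covering transformation group, which is isomorphic to $\pi_1(X,x_0)$ and hence trivial by hypothesis. Therefore $(\widetilde F)^n = \id_Y$, and $\widetilde F$ is an $n$-periodic homeomorphism lifting $F$ (its order is exactly $n$, since $(\widetilde F)^k = \id_Y$ would force $F^k = \id_X$).
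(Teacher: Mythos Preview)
Your argument is correct and rests on the same idea as the paper's: the element $(\widetilde F)^n$ is the deck transformation corresponding to the loop $\tau = \sigma\cdot(F\circ\sigma)\cdots(F^{n-1}\circ\sigma)$, and the trivial-center hypothesis forces it to be the identity. The paper reaches this conclusion by an inductive path computation showing $F^j_* = \tau^{(j)}_\# f^j_*$ for $\tau^{(j)} = \sigma\cdot(F\circ\sigma)\cdots(F^{j-1}\circ\sigma)$, whence $\tau^{(n)}_\# = \id$ and so $[\tau]$ is central; you instead establish the single conjugation identity $\widetilde F\, d_\gamma\, \widetilde F^{-1} = d_{f_*[\gamma]}$ and iterate it, which is a slightly cleaner packaging of the same bookkeeping. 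One small wording slip: the two sides of your displayed identity are deck transformations (they cover $\id_X$, not $F$); what covers $F$ are $\widetilde F\circ d_\gamma$ and $d_{f_*[\gamma]}\circ\widetilde F$, and it is these you are (correctly) comparing at $y_0$.
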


\begin{proof}
Let $H\colon f \simeq F$ be a homotopy from $f$ to $F$.  
Let $\sigma(t) = H(x_0, t)$ for $t \in [0,1]$.
Then $\sigma$ is a path from  $x_0$  to $F(x_0)$ and there is a commutative diagram
\begin{equation*}
 \begin{tikzcd}
\pi_1(X,x_0) \arrow[r,"F_*"]
  & \pi_1(X,F(x_0)) \\
\pi_1(X,x_0)  \arrow[u,equal] \arrow[r,"f_*"]
  & \pi_1(X,x_0).  \arrow[u,"\sigma_{\#}"']
\end{tikzcd}
 \end{equation*}
 Let $\tau^{(1)} = \sigma$ and for 
$j=2, \ldots, n$, let $\tau^{(j)}$ be the path from $x_0$ to $F^j(x_0)$ given by
$$\tau^{(j)} = \sigma \cdot (F \circ \sigma)  \, \cdots  \, (F^{j-1} \circ \sigma).$$
For $j=1, \ldots, n$,  we have commutative diagrams
\begin{equation*}
 \begin{tikzcd}
\pi_1(X,F^j(x_0)) \arrow[r,"F_*"]
  & \pi_1(X,F^{j+1}(x_0)) \\
\pi_1(X,x_0)  \arrow[u,"\tau^{(j)}_{\#}"] \arrow[r,"F_*"]
  & \pi_1(X,F(x_0)),  \arrow[u,"(F\circ\tau^{(j)})_{\#}"']
\end{tikzcd}
 \end{equation*}
 and so 
\[
F_*  \tau^{(j)}_{\#} = (F\circ\tau^{(j)})_{\#} F_* = (F\circ\tau^{(j)})_{\#} \sigma_{\#} f_* =  (\sigma \cdot (F\circ\tau^{(j)}))_{\#} f_* =  \tau^{(j+1)}_{\#} f_*.
\]
We have $F_* = \sigma_{\#} f_*  = \tau^{(1)}_{\#} f_*$ and so $F^2_* = F_* \tau^{(1)}_{\#} f_* = \tau^{(2)}_{\#} f^2_*$.
Proceeding inductively, we obtain $F^n_* =  \tau^{(n)}_{\#} f^n_*$.
By hypothesis, $F^n = \id_X$  and $f^n_*$ is the identity.
Hence $\tau^{(n)}_{\#}$ is the identity isomorphism.
Since $\tau = \tau^{(n)}$ is a closed loop, $\tau_{\#}$ is conjugation by $[\tau]$ and
hence $[\tau] \in \pi_1(X,x_0)$ is the identity element because, by hypothesis, $\pi_1(X,x_0)$ has a trivial center.
The conclusion of the theorem follows from Proposition \ref{prop:lift_one}.
\end{proof}
 
\smallskip

We recall some background from \cite[\S3]{Conner-Raymond} that will be needed.
Let $\cE(X)$ be the set of self-homotopy equivalences of $X$.
Give it the compact-open topology and,
in order to avoid pathologies,
also assume in the subsequent discussion that $X$ is a locally compact, separable metric ANR,
for example, a metrizable topological manifold.
Let $\cE(X,x_0)$ be  the subspace  of $\cE(X)$ consisting of  basepoint preserving maps.    %$\cE(X,x_0) = \{ f \in \cE(X) ~|~ f(x_0) = x_0\}$
Both $\cE(X)$ and $\cE(X,x_0)$ are H-spaces (with composition of maps as multiplication).
The map  $\Theta \colon \cE(X,x_0) \rightarrow \Aut(\pi_1(X,x_0))$ given by $\Theta(f) = f_*$ induces a group homomorphism
\begin{equation}\label{hequiv-one}
\Theta_* \colon \pi_0(\cE(X,x_0),\id_X) \rightarrow \Aut(\pi_1(X,x_0)).
\end{equation}

Assume that $X$ is path connected.
Given $h \in \cE(X)$ and a path $\sigma$ in $X$ from $x_0$  to $h(x_0)$ the composition
\begin{equation*}
\begin{tikzcd}
\pi_1(X,x_0) \arrow{r}{h_*} &  \pi_1(X,h(x_0)) \arrow{r}{\sigma^{-1}_{\#}} & \pi_1(X,x_0)
\end{tikzcd}
\end{equation*}
yields an automorphism of $\pi_1(X,x_0)$.
Another choice of path from $x_0$  to $h(x_0)$ gives an automorphism that agrees with $\sigma^{-1}_{\#}h_*$ up to an inner automorphism
yielding a well-defined map  $\Psi \colon \cE(X) \rightarrow \Out(\pi_1(X,x_0))$ that induces a group homomorphism
\begin{equation}\label{hequiv-two}
\Psi_* \colon \pi_0(\cE(X),\id_X) \rightarrow \Out(\pi_1(X,x_0)).
\end{equation}
Let $i \colon \cE(X,x_0) \hookrightarrow \cE(X)$ be the inclusion map.
The diagram
\begin{equation}\label{Aut-Out}
\begin{tikzcd}
\pi_0(\cE(X,x_0),\id_X) \arrow[r, "\Theta_*"] \arrow[d,"\pi_0(i)"']
& \Aut(\pi_1(X,x_0)) \arrow[d, "Q" ] \\
\pi_0(\cE(X),\id_X)  \arrow[r,"\Psi_*"]
& \Out(\pi_1(X,x_0))
\end{tikzcd}
\end{equation}
is commutative.

Let $\Homeo(X) \subset \cE(X)$ be the subspace consisting of self-homeomorphisms of $X$.
Note that if $X$ is locally connected and locally compact, then $\Homeo(X)$ with the compact-open topology is a topological group.  % This is a theorem of Arens in Amer. J. Math 1946.
The composition
\begin{equation}\label{Psi-again}
\begin{tikzcd}
\Homeo(X) \arrow[r,hook] &  \cE(X) \arrow{r}{\Psi} & \Out(\pi_1(X,x_0))
\end{tikzcd}
\end{equation}
is a group homomorphism which we also denote by $\Psi$ to avoid notation proliferation.

Recall that a space $X$ is {\it aspherical} if it has a contractible universal covering space.
If $X$ is aspherical, then the homomorphisms  $\Theta_*$  and  $\Psi_*$ in   \eqref{hequiv-one} and \eqref{hequiv-two}, respectively, are both isomorphisms.
For any group $G$, let $G_{\it fin}$ denote the subset of $G$ consisting of elements of finite order
($G_{\! \, \it fin}$ need not be a subgroup).

\begin{theorem}\label{thm:lift_three}
Let $X$ be an aspherical space with universal cover $Y$.
Let $F \colon X \rightarrow X$ be an $n$-periodic homeomorphism where $n > 1$.
Assume that $\pi_1(X,x_0)$ is torsion free and its center is trivial.
If $\Psi(F) \in \operatorname{image}\left(Q \colon \Aut(\pi_1(X,x_0))_{\it fin} \rightarrow \Out(\pi_1(X,x_0))\,\right)$,
then $F$ lifts to an $n$-periodic homeomorphism ${\widetilde F} \colon Y \rightarrow Y$.
\end{theorem}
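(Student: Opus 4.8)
The plan is to reduce everything to Theorem \ref{thm:lift_two}: I must produce a basepoint-preserving self-map $f \colon X \to X$, freely homotopic to $F$, whose induced automorphism $f_*$ has finite order $m$ with $m \mid n$. The hypothesis supplies a finite-order automorphism $\varphi \in \Aut(\pi_1(X,x_0))$ with $Q(\varphi) = \Psi(F)$, so the two things to check are (i) that the order of $\varphi$ divides $n$, and (ii) that $\varphi$ is induced by a map in the homotopy class of $F$.

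For (i) I would first observe that, since $\Psi$ restricted to $\Homeo(X)$ is a homomorphism and $\Psi(\id_X) = \id$, one has $\Psi(F)^n = \Psi(F^n) = \Psi(\id_X) = \id$, so the order $m$ of $\Psi(F)$ in $\Out(\pi_1(X,x_0))$ divides $n$. Because $\pi_1(X,x_0)$ is torsion free with trivial center and $\varphi$ has finite order, Proposition \ref{prop:torfreecenterless} applies and gives that the order of $\varphi$ equals the order of $Q(\varphi) = \Psi(F)$, namely $m$. Hence $\varphi$ has order $m \mid n$.

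For (ii) I would use asphericity. Since $X$ is aspherical, $\Theta_* \colon \pi_0(\cE(X,x_0),\id_X) \to \Aut(\pi_1(X,x_0))$ is an isomorphism, so there is $f \in \cE(X,x_0)$, unique up to basepoint-preserving homotopy, with $f_* = \varphi$; note $f$ need not be a homeomorphism, which is harmless for Theorem \ref{thm:lift_two}. To see $f$ is freely homotopic to $F$, chase the commutative square \eqref{Aut-Out}: since $\Psi(f) = Q(f_*) = Q(\varphi)$, we get $\Psi_*\bigl(\pi_0(i)[f]\bigr) = Q\bigl(\Theta_*[f]\bigr) = Q(\varphi) = \Psi(F) = \Psi_*[F]$ in $\Out(\pi_1(X,x_0))$; and $\Psi_*$ is an isomorphism because $X$ is aspherical, so $\pi_0(i)[f] = [F]$ in $\pi_0(\cE(X),\id_X)$, i.e.\ $F \simeq f$. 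Then $f$ is a basepoint-preserving map homotopic to $F$ with $f_*$ of order $m \mid n$, and the center of $\pi_1(X,x_0)$ is trivial, so Theorem \ref{thm:lift_two} yields the desired $n$-periodic lift $\widetilde F \colon Y \to Y$.

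I do not expect a genuine obstacle here, since all the needed machinery ($\Theta_*$, $\Psi_*$, the square \eqref{Aut-Out}, Proposition \ref{prop:torfreecenterless}, Theorem \ref{thm:lift_two}) is assembled in the preceding subsections; the only point demanding care is step (ii), where one must be sure that the map $f$ handed over by $\Theta_*$ genuinely lies in the free homotopy class of $F$ — this is exactly what commutativity of \eqref{Aut-Out} together with injectivity of $\Psi_*$ delivers — and, relatedly, keeping careful track of which homotopies are required to fix the basepoint $x_0$ and which are free.
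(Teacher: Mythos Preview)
Your proposal is correct and follows essentially the same route as the paper's proof: both argue that $\Psi(F)$ has order $m \mid n$ since $\Psi$ is a homomorphism on $\Homeo(X)$, invoke Proposition \ref{prop:torfreecenterless} to force $\varphi$ to have the same order $m$, use asphericity (via the isomorphisms $\Theta_*$ and $\Psi_*$ and the commutativity of \eqref{Aut-Out}) to realize $\varphi$ by a basepoint-preserving $f$ freely homotopic to $F$, and then feed $f$ into Theorem \ref{thm:lift_two}. Your write-up simply spells out the diagram chase in step (ii) a bit more carefully than the paper does.
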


\begin{proof}
Since $\Psi \colon \Homeo(X) \rightarrow \Out(\pi_1(X,x_0))$ is a group homomorphism,
$\Psi(F)$ has order $m$ dividing $n$.
By hypothesis, there exists $\varphi \in \Aut(\pi_1(X,x_0))$ of finite order such that $\Psi(F) = Q(\varphi)$.
By Proposition \ref{prop:torfreecenterless},
$\varphi$ also has order $m$.
Since $\Theta_*$ and $\Psi_*$ in diagram \eqref{Aut-Out} are isomorphisms (because $X$ is aspherical)
there exists $f \in \cE(X,x_0)$ such that $F$ is homotopic to $f$ and $\Theta(f) = \varphi$.
The conclusion of the Theorem follows from Theorem \ref{thm:lift_two}.
\end{proof}

\begin{remark}\label{def:finitedimCW}
If an aspherical space is  homotopy equivalent to a finite dimensional CW complex, then its fundamental group is torsion free.
\end{remark}

We make use of the following theorem of P.~A.~Smith.

\begin{theorem}[P.~A.~Smith]\label{thm:smith}
A periodic homeomorphism of $\RR^3$ has a fixed point.
\end{theorem}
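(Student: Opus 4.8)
The plan is to reduce the statement about $\RR^3$ to the corresponding statement about $S^3$, where the classical fixed-point theorem of P.~A.~Smith for periodic maps of spheres applies. So suppose $g \colon \RR^3 \to \RR^3$ is a periodic homeomorphism, say of prime order $p$ (it suffices to treat prime order: if $g$ has order $n = p \cdot k$ with $p$ prime, then $g^k$ has order $p$, and a fixed point of $g^k$ exists by the prime case; but in fact one only needs \emph{some} fixed point, and $\mathrm{Fix}(g) \subseteq \mathrm{Fix}(g^k)$ is not what we want, so instead we run the argument below directly for arbitrary finite order, or appeal to the fact that Smith theory handles $\ZZ/n$ for $n$ not necessarily prime once one knows it for prime powers and uses a Lefschetz-type count). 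First I would form the one-point compactification $S^3 = \RR^3 \cup \{\infty\}$; since $g$ is a homeomorphism of $\RR^3$ it extends canonically to a homeomorphism $\wh g \colon S^3 \to S^3$ fixing $\infty$, and $\wh g$ still has order $n$.

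The key step is then to invoke Smith's theorem in its standard form: a $\ZZ/n$-action by homeomorphisms on a finite-dimensional mod-$p$ homology sphere has fixed point set a mod-$p$ homology sphere (in particular nonempty) when $n = p$ is prime; and for general finite $n$ one writes the cyclic group as a tower of prime-order subgroups and applies this iteratively. Applying it to $\wh g$ acting on $S^3$ (which, being a genuine $3$-sphere, is certainly a mod-$p$ homology sphere for every $p$), we conclude $\mathrm{Fix}(\wh g) \ne \emptyset$, and moreover $\mathrm{Fix}(\wh g)$ is a mod-$p$ homology sphere, hence has at least the mod-$p$ Betti numbers of $S^0$, $S^1$, or $S^3$. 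In particular $\mathrm{Fix}(\wh g)$ is not a single point: if it equalled $\{\infty\}$, it would be a mod-$p$ homology $0$-sphere, i.e.\ it would have to consist of \emph{two} points, a contradiction. Therefore $\mathrm{Fix}(\wh g)$ contains a point of $S^3 \sm \{\infty\} = \RR^3$, which is the desired fixed point of $g$.

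The main obstacle — really the only subtlety — is handling the fixed point \emph{at infinity}: the extended map $\wh g$ automatically fixes $\infty$, so merely quoting ``Smith's theorem gives a fixed point'' is not enough; one must use the stronger conclusion that $\mathrm{Fix}(\wh g)$ is a (nonempty) mod-$p$ homology sphere and observe that a mod-$p$ homology sphere is never a single point. An alternative way to dispatch this point, avoiding any discussion of the homological structure of the fixed set, is a parity/Lefschetz argument: the Lefschetz number of $\wh g$ on $S^3$ is $L(\wh g) = 1 - (\pm 1)$, which is $0$ or $2$ depending on orientation behaviour; combining the Lefschetz fixed point theorem with Smith theory's constraint that the fixed set of a $\ZZ/p$ action on a mod-$p$ homology $3$-sphere is a mod-$p$ homology $r$-sphere for $r \in \{-1, 0, 1, 3\}$ and must be nonempty, one again rules out $r = 0$ with the single point $\infty$. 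Either route completes the proof; I would write it via the homology-sphere conclusion, as that is the cleanest and is exactly the classical statement of Smith's theorem one finds in the standard references.
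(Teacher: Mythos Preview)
Your proposal is correct and follows essentially the same route as the paper: extend to the one-point compactification $S^3$, note that $\infty$ is automatically fixed, and invoke Smith's theorem to see that the fixed set cannot be a single point. The only difference is that the paper sidesteps your discussion of non-prime periods by citing Smith's 1939 result for $S^3$ directly---the fixed set of any nontrivial periodic homeomorphism of $S^3$ is either empty or homeomorphic to $S^0$, $S^1$, or $S^2$---which covers all periods in one stroke and makes the ``not a singleton'' conclusion immediate.
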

\begin{proof} 
Let $F\colon \RR^3 \rightarrow \RR^3$ be a periodic homeomorphism.
Regarding $S^3$ as the one-point compactification of $\RR^3$, the map
$F$ extends to a periodic homeomorphism ${\widehat F} \colon S^3 \rightarrow S^3$.
Suppose $F$ has no fixed points.  Then the point at infinity is the only fixed point of  ${\widehat F}$.
By \cite[Theorem 4]{Smith-1939},
the fixed point set of a periodic homeomorphism of $S^3$ (other than the identity) is either empty
or is homeomorphic to one of $S^0$,  $S^1$ or $S^2$.
None of these possible fixed point sets is a singleton, a contradiction.
Hence $F$ must have a fixed point.
\end{proof}

\begin{theorem}\label{thm:fixedpoint}
Let $K \subset S^3$ be a knot that is not a torus knot.
Let $F \colon E_K \rightarrow E_K$ be a periodic homeomorphism.
If $\Psi(F) \in \operatorname{image}\left(Q \colon \Aut(G_K,P_K)_{\it fin} \rightarrow \Out(G_K,P_K)\,\right),$
then $F$ has a fixed point in the interior of $E_K$.
\end{theorem}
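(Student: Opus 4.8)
The plan is to combine the lifting criterion of Theorem~\ref{thm:lift_three} with P.~A.~Smith's fixed point theorem (Theorem~\ref{thm:smith}), the only subtlety being to arrange that the space to which we apply the lifting result is $E_K$ itself (or a homotopy equivalent aspherical complex whose universal cover is $\mathbb{R}^3$). First I would observe that, since $K$ is not a torus knot, $K$ is either hyperbolic or a nontrivial satellite or is the unknot; in all of these cases $E_K$ is aspherical (it is a Haken manifold, or covered by $\mathbb{R}^3$), and $G_K = \pi_1(E_K)$ is torsion free with trivial center. Torsion freeness is immediate from Remark~\ref{def:finitedimCW} since $E_K$ is a compact manifold; centerlessness uses the classification of knot groups with nontrivial center (these are exactly the torus knot groups, by Burde--Zieschang), which is precisely where the hypothesis ``not a torus knot'' enters. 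The unknot must be treated separately or excluded, but for the unknot $E_K = S^1 \times D^2$ and a periodic homeomorphism of the solid torus has a fixed point by an elementary argument, so this edge case causes no trouble; I would simply note it.

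Next I would apply Theorem~\ref{thm:lift_three} with $X = E_K$. The hypotheses are met: $E_K$ is aspherical, $\pi_1(E_K) = G_K$ is torsion free with trivial center, $F$ is $n$-periodic for some $n$, and by assumption $\Psi(F)$ lies in the image of $Q$ restricted to the finite-order automorphisms. (If $n = 1$, i.e.\ $F$ is the identity, the statement is trivial since every point is fixed, so we may assume $n > 1$.) The theorem then provides an $n$-periodic homeomorphism $\widetilde{F} \colon \widetilde{E_K} \to \widetilde{E_K}$ lifting $F$, where $\widetilde{E_K}$ is the universal cover of $E_K$. Here I need the geometric fact that the universal cover of $E_K$ is homeomorphic to $\mathbb{R}^3$: this holds because $E_K$ is an irreducible, aspherical open-ended $3$-manifold — more precisely, $\Int(E_K)$ is an aspherical $3$-manifold with infinite cyclic (hence infinite) fundamental group, so its universal cover is a simply connected, irreducible, non-compact $3$-manifold, which by the work on the geometrization/sphere theorem is $\mathbb{R}^3$. (Equivalently, for $K$ nontrivial $E_K$ is Haken with infinite fundamental group, and such manifolds have $\mathbb{R}^3$ as universal cover.)

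Finally, $\widetilde{F}$ is a nontrivial periodic homeomorphism of $\mathbb{R}^3$, so by Theorem~\ref{thm:smith} it has a fixed point $\widetilde{y} \in \widetilde{E_K}$. Projecting down, $y = p(\widetilde{y}) \in E_K$ satisfies $F(y) = y$ (because $p \circ \widetilde{F} = F \circ p$). It remains to see that $y$ can be taken in the interior of $E_K$: a periodic homeomorphism $F$ of $E_K$ restricts to a periodic homeomorphism of $\partial E_K \cong T^2$, and if $F$ had fixed points only on the boundary then in particular $F|_{\partial E_K}$ would have a fixed point; but this does not by itself force the interior fixed point, so instead I would note that $\widetilde{E_K} \cong \mathbb{R}^3$ is the universal cover of $\Int(E_K)$ (an open manifold), hence the lift $\widetilde{F}$ and its fixed point live over $\Int(E_K)$, giving $y \in \Int(E_K)$ directly. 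The main obstacle I anticipate is not any of these steps individually but rather assembling the $3$-manifold topology inputs cleanly: verifying asphericity of $E_K$ and the identification of its universal cover with $\mathbb{R}^3$ for \emph{all} non-torus knots (including satellites and cables), and checking that the relevant automorphism group being finite-order-realizable (the image-of-$Q$ hypothesis) is genuinely used — it is exactly what feeds Theorem~\ref{thm:lift_three}. If one wanted to avoid citing geometrization, one could instead invoke Waldhausen's theorem that Haken manifolds are aspherical together with the fact that an aspherical open $3$-manifold with $\pi_1 = \mathbb{Z}$ is covered by $\mathbb{R}^3$, which is classical.
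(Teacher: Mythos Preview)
Your proof is correct and follows essentially the same route as the paper: trivial center (Burde--Zieschang), apply Theorem~\ref{thm:lift_three} to get a periodic lift to the universal cover $\cong \RR^3$ (via Waldhausen), invoke Smith's theorem, and project the fixed point down. The only cosmetic difference is that the paper sets $X = \Int(E_K)$ from the outset and restricts $F$ to $F' = F|_{\Int(E_K)}$, which avoids your back-and-forth between $E_K$ and $\Int(E_K)$ (the universal cover of $E_K$ itself has boundary, so is not literally $\RR^3$) and makes the ``interior'' conclusion automatic; your aside about the unknot is also unnecessary, since the hypothesis that $G_K$ has trivial center already excludes it.
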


\begin{proof}
Since $K$ is not a torus knot, the center of $G_K$ is trivial, \cite{BZ-1966}
(also see \cite[Theorem 4]{Simon-1976}).
Let $M$ be the interior of $E_K$.
Note that the inclusion $M \hookrightarrow  E_K$ is a homotopy equivalence.
Choosing a basepoint $x_0 \in M$, we have $\pi_1(M,x_0) \cong G_K$.
By \cite[Theorem 8.1]{Waldhausen-1968},
the universal covering space of $M$ is homeomorphic to \hbox{$3$-dimensional} %WARNING TeX hack
Euclidean space, $\RR^3$.
Hence $M$ is aspherical and moreover $G_K$ is torsion free (see Remark \ref{def:finitedimCW}).
Let $n$ be the period of $F$.
Observe that $F$ restricts to an $n$-periodic homeomorphism of $M$, which we write as $F' \colon M \rightarrow M$,
and $\Psi(F) = \Psi(F')$.
By Theorem \ref{thm:lift_three}, $F'$ lifts to an $n$-periodic homeomorphism $\widetilde{F'} \colon \RR^3 \rightarrow \RR^3$.
By Theorem \ref{thm:smith},  $\widetilde{F'}$ has a fixed point $y \in \RR^3$. 
The image of $y$ under the covering projection $\RR^3 \rightarrow M$ is a fixed point of $F'$ and hence also of $F$.
\end{proof}

Note that if $F\colon E_K \rightarrow E_K$ is an $n$-periodic homeomorphism with a fixed point $x \in E_K$ and 
$\sigma$ is a path from a basepoint $x_0 \in E_K$  to $x$, then the composite
\begin{equation*}
\begin{tikzcd}
\pi_1(E_K,x_0) \arrow{r}{\sigma_{\#}} &  \pi_1(E_K,x) \arrow{r}{F_*} &  \pi_1(E_K,x) \arrow{r}{\sigma^{-1}_{\#}} & \pi_1(E_K,x_0)
\end{tikzcd}
\end{equation*}
is an automorphism $\varphi \in \Aut(G_K, P_K)$ of order $m$ dividing $n$;
furthermore, if $K$ is not a torus knot, then $m=n$ by the main theorem of \cite{Freedman-Yau}.

\begin{corollary}
Let $K \subset S^3$ be a knot that is not a torus knot.
Let $F \colon E_K \rightarrow E_K$ be a nontrivial periodic homeomorphism.
Then $F$ is a free symmetry if and only if
$$\Psi(F) \notin \operatorname{image}\left(Q \colon \Aut(G_K,P_K)_{\it fin} \rightarrow \Out(G_K,P_K)_{\it fin}\,\right).$$
\end{corollary}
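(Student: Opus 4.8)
The plan is to read off this corollary as the two-sided sharpening of Theorem~\ref{thm:fixedpoint}, once it is combined with the observation recorded in the paragraph immediately following its proof; each of the two implications will be proved by contraposition. First I would dispose of one bookkeeping point. Since $Q\colon\Aut(G_K,P_K)\to\Out(G_K,P_K)$ is a group homomorphism, it sends an automorphism of order $n$ to an outer automorphism of order dividing $n$; hence $\operatorname{image}\bigl(Q\colon\Aut(G_K,P_K)_{\it fin}\to\Out(G_K,P_K)\bigr)$ already lies in $\Out(G_K,P_K)_{\it fin}$ and therefore coincides with $\operatorname{image}\bigl(Q\colon\Aut(G_K,P_K)_{\it fin}\to\Out(G_K,P_K)_{\it fin}\bigr)$. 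Write $\cI$ for this common subset; what must then be shown is that $F$ is fixed-point free if and only if $\Psi(F)\notin\cI$, where ``free symmetry'' is understood to mean that $F$ acts without fixed points on $E_K$.

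For the forward implication I argue by contraposition, so assume $\Psi(F)\in\cI$. By the previous paragraph $\Psi(F)$ lies in the image of $Q$ restricted to the finite-order automorphisms, so Theorem~\ref{thm:fixedpoint} applies (recall $K$ is not a torus knot) and yields a fixed point of $F$ in $\Int(E_K)$. In particular $F$ has a fixed point, and so is not a free symmetry.

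For the converse, again by contraposition, assume $F$ is not free, i.e.\ it has a fixed point $x\in E_K$, and let $n>1$ be its period. Choose a path $\sigma$ from the basepoint $x_0$ to $x$. By the paragraph following Theorem~\ref{thm:fixedpoint}, the composite $\varphi=\sigma^{-1}_{\#}\circ F_*\circ\sigma_{\#}$ is an automorphism in $\Aut(G_K,P_K)$ of finite order (in fact of order exactly $n$, since $K$ is not a torus knot, by \cite{Freedman-Yau}, although only finiteness of the order is needed here), so $\varphi\in\Aut(G_K,P_K)_{\it fin}$. It then remains to identify $Q(\varphi)$ with $\Psi(F)$. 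Taking $\rho=\sigma\cdot(F\circ\sigma)^{-1}$, a path from $x_0$ to $F(x_0)$, naturality of the change-of-basepoint isomorphisms gives $\varphi=\rho^{-1}_{\#}\circ F_*$ as automorphisms of $\pi_1(E_K,x_0)$, and $\rho^{-1}_{\#}\circ F_*$ is, by the very definition of $\Psi$, a representative of $\Psi(F)$ in $\Out(G_K,P_K)$. Hence $\Psi(F)=Q(\varphi)\in\cI$, which is what contraposition requires.

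Both substantive inputs --- Theorem~\ref{thm:fixedpoint} and the observation following it, which ultimately rest on Theorems~\ref{thm:lift_three} and~\ref{thm:smith} --- are already in hand, so the corollary itself presents no new difficulty; the one place that calls for a little care is the basepoint juggling in the converse that identifies the concretely defined $\varphi$ with the outer automorphism $\Psi(F)$. It is worth emphasizing in the write-up that the argument works precisely because a nontrivial periodic homeomorphism of $E_K$ obeys a clean dichotomy --- it either is fixed-point free or has a fixed point --- with the first alternative handled by the contrapositive of Theorem~\ref{thm:fixedpoint} and the second by the observation following it, and it is exactly this dichotomy that promotes the one-directional Theorem~\ref{thm:fixedpoint} to the claimed equivalence.
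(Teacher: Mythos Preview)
Your proof is correct and follows exactly the route the paper intends: the corollary is stated without proof, being an immediate consequence of Theorem~\ref{thm:fixedpoint} (for one direction) and the observation in the paragraph following it (for the other), and your contrapositive argument together with the basepoint computation identifying $Q(\varphi)$ with $\Psi(F)$ makes this precise.
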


For example, the hyperbolic knot $9_{48}$ is freely periodic with order $p=3$ (see \cite{Hartley-1981} and \cite{Riley-1982}), and so the associated element in $\Out(G_K)$ has order 3 but is not in the image of $\Aut(G_K)_{\it fin}$.

\setcounter{section}{5}
\subsection{Proof of Theorem \ref{thm-main}.}\label{sec:proof}
The proof of  Theorem \ref{thm-main}  divides into two cases, the case $K$ is a torus knot and the case $K$ is not a torus knot.

\medskip \noindent
{\bf Case 1: Torus knots.}  Let  $K$ be the $(r,s)$-torus knot.
The knot group of $K$ admits the well-known presentation:
$$G_K = \langle a, b \mid  a^r = b^s\rangle.$$
In \cite{Schreier-1924}, Schreier determined structure of the  automorphism group of $G_K$ showing that 
$$\Aut(G_K) = \langle I, J, H \mid  J^r = J^s = H^2 = (HI)^2 = (HJ)^2 = I \rangle,$$
where $I$ is conjugation by $a \in G_K$ and $J$ is conjugation by $b \in G_K$  and $H$ is the order two automorphism given on generators by $H(a) = a^{-1}$ and $H(b) = b^{-1}.$

The subgroup of $\Aut(G_K)$ generated by $I$ and $J$ has index two and is isomorphic to the free product $\ZZ/r * \ZZ/s$, so we get a short exact sequence:

$$1\to \ZZ/r * \ZZ/s \to \Aut(G_K) \to \ZZ/2 \to 1.$$

Assume that $\phi \in \Aut(G_K)$ is an automorphism induced from a $p$-periodic
welded diagram of $K$. Proposition \ref{prop-order}
implies that $\phi$ has order $p$.
By Proposition \ref{good-guys-win},
$\varphi$ is orientation preserving and thus $\phi$ lies in the kernel $\Aut(G_K) \to \ZZ/2$.
Thus $\phi$ is an element in $\ZZ/r * \ZZ/s$, and we apply the
Kurosch Subgroup Theorem \cite{Kurosh} to conclude that $p$ must divide $r$ or $s$.
Note that a suitably symmetric picture of an $(r,s)$-torus knot drawn on a torus makes
it evident that such a knot indeed has cyclic periods of order $p$ dividing $r$ or $s$.
This establishes the conclusion of Theorem \ref{thm-main} in the case $K$ is a torus knot.

\medskip \noindent
{\bf Case 2:  Non-Torus knots.} 
Assume that $K$ is not a torus knot.
Since $K$ is not a torus knot, its knot group $G_K$ is torsion free with trivial center (see the proof of Theorem \ref{thm:fixedpoint}).
Thus, if $\phi \in \Aut^+(G_K, P_K)$ has order $p$, then so does its image  $Q(\phi)\in \Out^+(G_K, P_K)$
by Proposition \ref{prop:torfreecenterless}.

For a compact 3-manifold $M$,  let $\operatorname{Diff}(M)$, 
$\operatorname{Homeo}_{PL}(M)$ and $\operatorname{Homeo}(M)$ be, respectively, the groups of diffeomorphisms,  $PL$-homeomorphisms and homeomorphisms of $M$.
It is a consequence of the solution of  the Smale Conjecture by Hatcher  \cite{Hatcher-1983} that the inclusions
$$\operatorname{Diff}(M) \hookrightarrow  \operatorname{Homeo}_{PL}(M) \hookrightarrow \operatorname{Homeo}(M)$$
are homotopy equivalences, and hence induce isomorphisms
$$\Mcg(M) = \pi_0(\operatorname{Diff}(M),\id_{M})  \cong \pi_0(\operatorname{Homeo}_{PL}(M),\id_{M})  \cong \pi_0(\operatorname{Homeo}(M),\id_{M}).$$
As in \eqref{Psi-again},  we denote the composite
\begin{equation*}
\begin{tikzcd}
\operatorname{Homeo}_{PL}(E_K) \arrow[r,hook] &  \cE(E_K) \arrow{r}{\Psi} & \Out(G_K)
\end{tikzcd}
\end{equation*}
by $\Psi$.
Note that for any nontrivial knot $K$ in $S^3$, the knot exterior $E_K$ is a Haken manifold.
Waldhausen's result \cite[Corollary 7.5]{Waldhausen-1968}, which applies to Haken manifolds, asserts that $\Psi$ as above induces an isomorphism
$$ \pi_0(\operatorname{Homeo}_{PL}(E_K),\id_{E_K}) \cong \Out(G_K,P_K).$$
Thus if $Q(\phi)$ has order $p$, then there is a unique $\varphi_* \in \Mcg(E_K)$  (or, equivalently, in $\pi_0(\operatorname{Homeo}_{PL}(E_K),\id_{E_K})$) of order $p$
such that $\Psi_*(\varphi_*) = Q(\varphi)$.

Assume that $K$ admits a $p$-periodic virtual or welded diagram $D$.
As explained in \ref{sec:periodic-virtual},
$D$ gives rise to an element $\phi \in \Aut^+(G_K,P_K)$ which by Proposition \ref{prop-order} has order $p$.
As observed above, $Q(\phi)\in \Out^+(G_K,P_K)$ also has order $p$.
Zimmermann's ``Nielsen Realization Theorem'',
\cite[Satz 0.1]{Zimmermann-1982}
together with  its Addendum, \cite[p.\,358]{Zimmermann-1982} 
(where the case of a $3$-manifold with torus boundary components is discussed, also see \cite{Heil-Tollefson}) asserts that  $\phi_*$ can be realized as a PL homeomorphism $F_\phi \colon E_K \to E_K$, which again has order $p$.
Furthermore, $F_\varphi$ is orientation preserving because $\phi$ is orientation preserving (see the discussion in \ref{Out-groups}).
%By the Gordon-Luecke theorem \cite{Gordon-Luecke-1989},  %We need equivariant extension, GL is not good enough.
By \cite[Theorem 2]{Luo-1992}, $F_\phi$ extends to a \linebreak %WARNING TeX hack
$p$-periodic
homeomorphism of $S^3$ that preserves $K$.
This shows that the classical knot $K$ admits a symmetry of order $p$, either as a cyclic period or as a free period.
However, since $Q(\phi)= \Psi(F_\phi)$ is the image of an element in $\Aut(G_K,P_K)$ of finite order, Theorem \ref{thm:fixedpoint} asserts that $F_\phi$ must have a fixed point in the interior of $E_K$. In particular, $F_\phi$ is not a free symmetry, and so $K$ is a $p$-periodic classical knot. This completes the proof of Theorem \ref{thm-main}.\qed

\bigskip

In \cite{Hillman-1984}, Hillman showed that a nontrivial classical link admits only finitely many periods, extending  Flapan's result for classical knots \cite{Flapan-1985}.
We do not know whether the same conclusion is true for the virtual and welded periods of classical links with more than one component.
Theorem \ref{thm-main}, if extended to links, would imply that the answer is no, that is, any virtual or welded period of a link is also a classical period.
Note that an alternative approach would be needed to deal with split links, since the exterior of a split link is not Haken and so Waldhausen's result \cite[Corollary 7.5]{Waldhausen-1968} no longer applies.
Extending Theorem \ref{thm-main} to links remains an interesting and open problem.

\subsection*{Acknowledgements} 
We would like to thank Micah Chrisman, Robin Gaudreau, Matthias Nagel, and an anonymous referee for their helpful comments. 
H. Boden and A. Nicas were supported by  grants from the Natural Sciences and Engineering Research Council of Canada.

%\pagebreak %WARNING TeX hack

%\bibliographystyle{halpha}    % a different bib style (requires the file halpha.bst)
%\bibliographystyle{alpha}
%\bibliographystyle{amsplain}
\bibliographystyle{amsalpha}
\begin{bibdiv}
\begin{biblist}
\bib{Boden-Nicas-White-2017}{misc}{
      author={Boden, Hans~U.},
      author={Nicas, Andrew~J.},
      author={White, Lindsay},
       title={Alexander invariants of periodic virtual knots},
        date={2017},
note={\href{https://arxiv.org/pdf/1706.02671.pdf}{ArXiv/1706.02671}, to
  appear in Dissert. Math.},}

\bib{BZ-1966}{article}{
      author={Burde, Gerhard},
      author={Zieschang, Heiner},
       title={Eine {K}ennzeichnung der {T}orusknoten},
        date={1966},
        ISSN={0025-5831},
     journal={Math. Ann.},
      volume={167},
       pages={169\ndash 176},
         url={https://doi-org.libaccess.lib.mcmaster.ca/10.1007/BF01362170},
      review={\MR{0210113}},
}

\bib{BZH-2014}{book}{
      author={Burde, Gerhard},
      author={Zieschang, Heiner},
      author={Heusener, Michael},
       title={Knots},
     edition={extended},
      series={De Gruyter Studies in Mathematics},
   publisher={De Gruyter, Berlin},
        date={2014},
      volume={5},
        ISBN={978-3-11-027074-7; 978-3-11-027078-5},
      review={\MR{3156509}},
}

\bib{Conner-Raymond}{article}{
      author={Conner, Pierre~E.},
      author={Raymond, Frank},
       title={Deforming homotopy equivalences to homeomorphisms in aspherical
  manifolds},
        date={1977},
        ISSN={0002-9904},
     journal={Bull. Amer. Math. Soc.},
      volume={83},
      number={1},
       pages={36\ndash 85},
         url={http://dx.doi.org/10.1090/S0002-9904-1977-14179-7},
      review={\MR{0467777}},
}

\bib{Flapan-1985}{article}{
      author={Flapan, Erica},
       title={Infinitely periodic knots},
        date={1985},
        ISSN={0008-414X},
     journal={Canad. J. Math.},
      volume={37},
      number={1},
       pages={17\ndash 28},
         url={http://dx.doi.org/10.4153/CJM-1985-002-4},
      review={\MR{777036}},
}

\bib{Fox-1962-a}{incollection}{
      author={Fox, Ralph~H.},
       title={Knots and periodic transformations},
        date={1962},
   booktitle={Topology of 3-manifolds and related topics ({P}roc. {T}he {U}niv.
  of {G}eorgia {I}nstitute, 1961)},
   publisher={Prentice-Hall, Englewood Cliffs, N.J.},
       pages={177\ndash 182},
      review={\MR{0140101}},
}

\bib{Freedman-Yau}{article}{
      author={Freedman, Michael},
      author={Yau, Shing~Tung},
       title={Homotopically trivial symmetries of {H}aken manifolds are toral},
        date={1983},
        ISSN={0040-9383},
     journal={Topology},
      volume={22},
      number={2},
       pages={179\ndash 189},
         url={https://doi.org/10.1016/0040-9383(83)90030-7},
      review={\MR{683759}},
}

\bib{Gordon-Luecke-1989b}{article}{
      author={Gordon, Cameron ~McA.},
      author={Luecke, John},
       title={Knots are determined by their complements},
        date={1989},
        ISSN={0894-0347},
     journal={J. Amer. Math. Soc.},
      volume={2},
      number={2},
       pages={371\ndash 415},
         url={https://doi-org.libaccess.lib.mcmaster.ca/10.2307/1990979},
      review={\MR{965210}},
}

\bib{Goussarov-Polyak-Viro}{article}{
      author={Goussarov, Mikhael},
      author={Polyak, Michael},
      author={Viro, Oleg},
       title={Finite-type invariants of classical and virtual knots},
        date={2000},
        ISSN={0040-9383},
     journal={Topology},
      volume={39},
      number={5},
       pages={1045\ndash 1068},
         url={http://dx.doi.org/10.1016/S0040-9383(99)00054-3},
      review={\MR{1763963 (2001i:57017)}},
}

\bib{Hartley-1981}{article}{
      author={Hartley, Richard},
       title={Knots with free period},
        date={1981},
        ISSN={0008-414X},
     journal={Canad. J. Math.},
      volume={33},
      number={1},
       pages={91\ndash 102},
         url={https://doi.org/10.4153/CJM-1981-009-7},
      review={\MR{608857}},
}

\bib{Hatcher-1983}{article}{
      author={Hatcher, Allen~E.},
       title={A proof of the {S}male conjecture, {${\rm Diff}(S^{3})\simeq {\rm
  O}(4)$}},
        date={1983},
        ISSN={0003-486X},
     journal={Ann. of Math. (2)},
      volume={117},
      number={3},
       pages={553\ndash 607},
         url={https://doi-org.libaccess.lib.mcmaster.ca/10.2307/2007035},
      review={\MR{701256}},
}

\bib{Heil-1981}{article}{
      author={Heil, Wolfgang},
       title={Normalizers of incompressible surfaces in {$3$}-manifolds},
        date={1981},
        ISSN={0017-095X},
     journal={Glas. Mat. Ser. III},
      volume={16(36)},
      number={1},
       pages={145\ndash 150},
      review={\MR{634302}},
}

\bib{Heil-Tollefson}{article}{
      author={Heil, Wolfgang},
      author={Tollefson, Jeffrey~L.},
       title={On {N}ielsen's theorem for {$3$}-manifolds},
        date={1987},
        ISSN={0044-0523},
     journal={Yokohama Math. J.},
      volume={35},
      number={1-2},
       pages={1\ndash 20},
      review={\MR{928368}},
}

\bib{Hillman-1984}{article}{
      author={Hillman, Jonathan A.},
      title={Links with infinitely many semifree periods are trivial},
        date={1984},
       ISSN = {0003-889X}
     journal={Arch. Math. (Basel)},
      volume={42},
      number={6},
       pages={568\ndash 572},
       url = {http://dx.doi.org/10.1007/BF01194056},
      review={\MR{756900}},
}

\bib{Kauffman-1999}{article}{
      author={Kauffman, Louis~H.},
       title={Virtual knot theory},
        date={1999},
        ISSN={0195-6698},
     journal={European J. Combin.},
      volume={20},
      number={7},
       pages={663\ndash 690},
         url={http://dx.doi.org/10.1006/eujc.1999.0314},
      review={\MR{1721925 (2000i:57011)}},
}

\bib{Kawauchi-1990}{book}{
      author={Kawauchi, Akio},
       title={A survey of knot theory},
   publisher={Birkh\"auser Verlag, Basel},
        date={1996},
        ISBN={3-7643-5124-1},
        note={Translated and revised from the 1990 Japanese original by the
  author},
      review={\MR{1417494}},
}

\bib{Kim-2000}{article}{
      author={Kim, Se-Goo},
       title={Virtual knot groups and their peripheral structure},
        date={2000},
        ISSN={0218-2165},
     journal={J. Knot Theory Ramifications},
      volume={9},
      number={6},
       pages={797\ndash 812},
         url={http://dx.doi.org/10.1142/S0218216500000451},
      review={\MR{1775387 (2001j:57010)}},
}

\bib{Kurosh}{article}{
      author={Kurosch, Alexander},
       title={Die {U}ntergruppen der freien {P}rodukte von beliebigen
  {G}ruppen},
        date={1934},
        ISSN={0025-5831},
     journal={Math. Ann.},
      volume={109},
      number={1},
       pages={647\ndash 660},
         url={https://doi.org/10.1007/BF01449159},
      review={\MR{1512914}},
}

\bib{Lee-2012}{article}{
      author={Lee, Sang~Youl},
       title={Genera and periodicity of virtual knots and links},
        date={2012},
        ISSN={0218-2165},
     journal={J. Knot Theory Ramifications},
      volume={21},
      number={4},
       pages={1250037, 15},
         url={http://dx.doi.org/10.1142/S0218216511009881},
      review={\MR{2890463}},
}

\bib{Luo-1992}{article}{
      author={Luo, Feng},
       title={Actions of finite groups on knot complements},
        date={1992},
        ISSN={0030-8730},
     journal={Pacific J. Math.},
      volume={154},
      number={2},
       pages={317\ndash 329},
         url={http://projecteuclid.org/euclid.pjm/1102635624},
      review={\MR{1159514}},
}

\bib{Bass-Morgan}{book}{
      editor={Morgan, John~W.},
      editor={Bass, Hyman},
       title={The {S}mith conjecture},
      series={Pure and Applied Mathematics},
   publisher={Academic Press, Inc., Orlando, FL},
        date={1984},
      volume={112},
        ISBN={0-12-506980-4},
        note={Papers presented at the symposium held at Columbia University,
  New York, 1979},
      review={\MR{758459}},
}

\bib{Mostow}{article}{
      author={Mostow, George~D.},
       title={Quasi-conformal mappings in {$n$}-space and the rigidity of
  hyperbolic space forms},
        date={1968},
        ISSN={0073-8301},
     journal={Inst. Hautes \'Etudes Sci. Publ. Math.},
      number={34},
       pages={53\ndash 104},
         url={http://www.numdam.org/item?id=PMIHES_1968__34__53_0},
      review={\MR{0236383}},
}

\bib{Prasad}{article}{
      author={Prasad, Gopal},
       title={Strong rigidity of {${\bf Q}$}-rank {$1$} lattices},
        date={1973},
        ISSN={0020-9910},
     journal={Invent. Math.},
      volume={21},
       pages={255\ndash 286},
         url={https://doi.org/10.1007/BF01418789},
      review={\MR{0385005}},
}

\bib{Riley-1982}{incollection}{
      author={Riley, Robert},
       title={Seven excellent knots},
        date={1982},
   booktitle={Low-dimensional topology ({B}angor, 1979)},
      series={London Math. Soc. Lecture Note Ser.},
      volume={48},
   publisher={Cambridge Univ. Press, Cambridge-New York},
       pages={81\ndash 151},
      review={\MR{662430}},
}

\bib{Satoh-2000}{article}{
      author={Satoh, Shin},
       title={Virtual knot presentation of ribbon torus-knots},
        date={2000},
        ISSN={0218-2165},
     journal={J. Knot Theory Ramifications},
      volume={9},
      number={4},
       pages={531\ndash 542},
         url={http://dx.doi.org/10.1142/S0218216500000293},
      review={\MR{1758871 (2001c:57029)}},
}

\bib{Schreier-1924}{article}{
      author={Schreier, Otto},
       title={\"Uber die gruppen {$A^aB^b=1$}},
        date={1924},
        ISSN={0025-5858},
     journal={Abh. Math. Sem. Univ. Hamburg},
      volume={3},
      number={1},
       pages={167\ndash 169},
         url={http://dx.doi.org/10.1007/BF02954621},
      review={\MR{3069424}},
}

\bib{Simon-1976}{article}{
      author={Simon, Jonathan},
       title={Roots and centralizers of peripheral elements in knot groups},
        date={1976},
        ISSN={0025-5831},
     journal={Math. Ann.},
      volume={222},
      number={3},
       pages={205\ndash 209},
         url={http://dx.doi.org/10.1007/BF01362577},
      review={\MR{0418079}},
}

\bib{Smith-1939}{article}{
      author={Smith, Paul~A.},
       title={Transformations of finite period. {II}},
        date={1939},
        ISSN={0003-486X},
     journal={Ann. of Math. (2)},
      volume={40},
       pages={690\ndash 711},
         url={http://dx.doi.org.libaccess.lib.mcmaster.ca/10.2307/1968950},
      review={\MR{0000177}},
}

\bib{Tsau}{article}{
      author={Tsau, Chichen~M.},
       title={Isomorphisms and peripheral structure of knot groups},
        date={1988},
        ISSN={0025-5831},
     journal={Math. Ann.},
      volume={282},
      number={2},
       pages={343\ndash 348},
         url={https://doi-org.libaccess.lib.mcmaster.ca/10.1007/BF01456980},
      review={\MR{963021}},
}

\bib{Waldhausen-1968}{article}{
      author={Waldhausen, Friedhelm},
       title={On irreducible {$3$}-manifolds which are sufficiently large},
        date={1968},
        ISSN={0003-486X},
     journal={Ann. of Math. (2)},
      volume={87},
       pages={56\ndash 88},
         url={http://dx.doi.org/10.2307/1970594},
      review={\MR{0224099}},
}

\bib{Zimmermann-1982}{article}{
      author={Zimmermann, Bruno},
       title={Das {N}ielsensche {R}ealisierungsproblem f\"ur hinreichend gro\ss
  e {$3$}-{M}annigfaltigkeiten},
        date={1982},
        ISSN={0025-5874},
     journal={Math. Z.},
      volume={180},
      number={3},
       pages={349\ndash 359},
         url={http://dx.doi.org/10.1007/BF01214175},
      review={\MR{664520}},
}
\end{biblist}
\end{bibdiv}

\end{document}